\newtheorem{theorem}{Theorem}[section]
\newtheorem{lemma}[theorem]{Lemma}
\newtheorem{proposition}[theorem]{Proposition}
\newtheorem{conjecture}[theorem]{Conjecture}
\newtheorem{corollary}[theorem]{Corollary}
\theoremstyle{definition}
\newtheorem{example}[theorem]{Example}
\theoremstyle{remark}
\def\Fq{{\mathbb F}_q}
\def\d{{\delta}}
\def\AA{{\mathbb A}}
\def\FF{{\mathbb F}}
\def\PP{{\mathbb P}}
\def\dP{\widehat{\mathbb P}}
\def\hp3{\widehat{\mathbb P}^3}
\newcommand{\RM}{\mathrm{RM}}
\newcommand{\PRM}{\mathrm{PRM}}
\newcommand{\V}{\mathsf{V}}
\newcommand{\sP}{\mathsf{P}}
\newcommand{\Vmd}{\mathscr{V}_{m,d}}
\def\Aff{{\mathbb A}}
\def\Z{{\mathsf Z}}
\begin{document}

% \title[short text for running head]{full title}
\title[Tsfasman-Boguslavsky Conjecture and Projective Reed-Muller codes]{Remarks on Tsfasman-Boguslavsky Conjecture and 
Higher~Weights of Projective Reed-Muller Codes}
%Generalized Hamming

%    author one information
% \author[short version for running head]{name for top of paper}
\author{Mrinmoy Datta}
\address{Department of Mathematics,
Indian Institute of Technology Bombay,\newline \indent
Powai, Mumbai 400076, India.}
\curraddr{}
\email{mrinmoy.dat@gmail.com}
\thanks{The first named author is partially supported by a doctoral fellowship from the National Board for Higher Mathematics, a division of the Department of Atomic Energy, Govt. of India.}

  %  author two information
\author{Sudhir R. Ghorpade}
\address{Department of Mathematics, 
Indian Institute of Technology Bombay,\newline \indent
Powai, Mumbai 400076, India.}
\email{srg@math.iitb.ac.in}
%\urladdr{http://www.math.iitb.ac.in/$\sim$srg/}
\thanks{The second named author is partially supported by Indo-Russian project INT/RFBR/P-114 from the Department of Science \& Technology, Govt. of India and  IRCC Award grant 12IRAWD009 from IIT Bombay.}

\subjclass[2010]{Primary 14G15, 11G25, 14G05; Secondary 11T71, 94B27, 51E20}
%\subjclass[2010]{11T35, 11T06 20G40, 15B05}
%    The 2010 edition of the Mathematics Subject Classification is
%    now available.  If you are citing a classification from the
%    new scheme, use the following input coding instead.
%\subjclass[2010]{Primary }

\date{}

\begin{abstract}

Tsfasman-Boguslavsky Conjecture predicts the maximum number of zeros that a system of linearly independent homogeneous polynomials of the same positive degree with coefficients in a finite field 
can have in the  corresponding projective space. We give a self-contained proof to show that this 
conjecture holds in the affirmative in the case of systems of three homogeneous polynomials, and also to show that the conjecture is false in the case of five quadrics in the 3-dimensional projective space 
over a finite field.  Connections between the Tsfasman-Boguslavsky Conjecture and the determination of generalized Hamming weights of projective Reed-Muller codes are outlined and these are also exploited 
to show that this conjecture holds in the affirmative in the case of systems of a ``large'' number of three 
homogeneous polynomials, and to deduce the counterexample of 5 quadrics. An application to the nonexistence of lines in certain Veronese varieties over finite fields is also included. 
\end{abstract}

\maketitle

%    Text of article.
\section{Introduction}
\label{sec:in}

Fix positive integers $m, d, r$  and a finite field  $\Fq$ with $q$ elements. Denote by $S$   the ring 
$\Fq[x_0,x_1, \dots , x_m]$ of polynomials in $m+1$ variables with coefficients in $\Fq$. 
%and for any nonnegative integer $j$,  denote by $S_j$ the $j^{\rm th}$ homogeneous component of $S$ %ttconsisting of homogeneous polynomials in $S$ of degree $j$. 
For any integer $k$, let 
\begin{equation}
\label{pk}
p_k: = \begin{cases} q^k + q^{k-1} + \dots + q + 1 % = \frac{q^{k+1}-1}{q-1} 
& \text{ if } k \ge 0, \\
0 &  \text{ if } k<0. \end{cases} 
\end{equation}
Evidently, if $k\ge 0$, then $p_k$ is the number of points in $\PP^k(\Fq)$, the $k$-dimensional projective space over $\Fq$. Here is a remarkable conjecture mentioned in the title. 
%The conjecture of Tsfasman-Boguslavsky is a remarkable assertion concerning the maximum number of %zeros in  $\PP^m(\Fq)$ that $r$ linearly independent polynomials in $S_d$. It states as follows.

%\medskip
%\noindent

{\bf Tsfasman-Boguslavsky Conjecture (TBC)}: Assume that %Suppose 
$r \le \binom{m+d}{d}$ and $d < q-1$. Then 
the maximum number of common zeros in $\PP^m(\Fq)$ that a system of $r$ linearly independent homogeneous polynomials of degree $d$ in $S$ %$\Fq[x_0,x_1, \dots , x_m]$ 
can have  is
\begin{equation}
\label{TBr}
T_r(d,m): = \displaystyle{   p_{m-2j} + \sum_{i=j}^m} \nu_i (p_{m-i} - p_{m-i-j}),
\end{equation}
where $(\nu_1, \dots, \nu_{m+1})$ is the $r$th element in descending lexicographic order among 
%the exponent vectors of monomials in $m+1$ variables of degree $d$, i.e., among 
$(m+1)$-tuples $(\alpha_1, \dots , \alpha_{m+1})$ of nonnegative integers satisfying 
$\alpha_1+ \cdots + \alpha_{m+1} = d$, and where $j := \min\{i : \nu_i \ne 0\}$. 
%Consider the indexing set $\Lambda:=\left\{(\alpha_1, \dots , \alpha_{m+1})\in \Z^{m+1}:

The TBC, i.e., the above conjecture, has been shown to hold in the affirmative when $r=1$ by Serre \cite{Se} and independently, by 
S{\o}rensen  \cite{So}  in 1991 and when $r=2$ by Boguslavsky \cite{Bog} in 1997. Recently, in \cite{DG1} and \cite{DG} we proved the following. 
%\smallskip

\begin{enumerate}
\item[{\bf 1.}] TBC % Tsfasman-Boguslavsky Conjecture 
holds in the affirmative when $r \le m+1$. 
\medskip

\item[{\bf 2.}]
TBC is false, in general, when $r > m+1$ and $d > 1$. More precisely,  
\eqref{TBr} is false for $d =2 $ and at least $\binom{m-1}{2}$ values of 
 %for any positive integers 
$r$ with $m+1 < r \le \binom{m+2}{2}$.
\end{enumerate}
Our proofs of {\bf 1} and {\bf 2} use, respectively, the following nontrivial theorems. %make essential use of the fo\medskip

\begin{description}
\item[Heijnen-Pellikaan Theorem] % for $r\le m + 1$]
Assume that $d< q$ and $r\le m+1$. 
Then the maximum number of %common 
zeros in $\AA^m(\Fq)$ that %of 
a system of $r$ linearly independent  polynomials in $\Fq[x_1, \dots , x_m]$ of degree at most $d$ can have~is 
$$
(d-1)q^{m-1} + \lfloor q^{m-r} \rfloor.
$$ 

\medskip

\item[Zanella's Theorem] 
For any integer %$j$ with 
$j \ge -1$, write 
$
\delta_j : = 1 + 2 + \dots + (j+1)$.
% \text{ and note that }
%0 = \delta_{-1} < 1 = \delta_1  < \dots < \delta_m = \binom{m+2}{2}.
%$$ 
Assume that 
$r \le \delta_m$ and let  $k$ be the unique integer such that 
$$
-1\le k < m \quad \text{and} \quad \delta_m - \delta_{k+1} < r \le \delta_{m} - \delta_{k}.
$$ 
%Let
If  $e_r(2,m)$ denotes  
the maximum number of zeros in $\PP^m(\Fq)$ for a system of $r$ 
linearly independent homogeneous %quadrics 
polynomials in $S$
% homogeneous polynomials in $\Fq[x_0, \dots , x_m]$  
of degree $2$, then  
%can have. Then %satisfies:
%$\V(F_1, \dots , F_r)$ of common zeros in $\PP^m(\Fq)$ 
$$
e_r(2,m)  \le p_k + \lfloor q^{\epsilon -1} \rfloor, \quad \text{ where } \quad \epsilon: =  \delta_{m} - \delta_{k} - r.
$$
\end{description}
%To be sure, what we have 
What is stated above are, in fact,  special cases of the results of Heijnen and Pellikaan \cite{HP}, 
which deals, more generally, with the case $r \le \binom{m+d}{d}$, and of Zanella \cite{Z}, which gives an exact value for $e_r(2,m)$. % defined above). 
But even in these special cases, the results are nontrivial, and it would be interesting to have proofs of {\bf 1} and {\bf 2} that are independent of these nontrivial results. It is hoped that such proofs could pave the way toward a more general conjecture stated in \cite[Conjecture 6.6] {DG1} that %aims to refine 
ameliorates the TBC. With this in view, we give in this paper fairly self-contained proofs to show that: 
\begin{enumerate}
\item[{\bf 1.}] TBC % Tsfasman-Boguslavsky Conjecture 
holds in the affirmative when $r =3$ %and also when 
or $\binom{m+d}{d} - j$  for $j= 0, 1, \dots , d$. 
% for the last $d+1$ values of $r$, i.e, for 
%$r = \binom{m+d}{d} - j$ 
%$r=k-j$ for $j= 0, 1, \dots , d$. 
\smallskip

\item[{\bf 2.}]
TBC is false when $m=3$, $d=2$ and $r=5$, i.e., for $5$ quadrics in $\PP^3$. 
%\medskip
%
%\item[{\bf 3.}]
%TBC holds in the affirmative for the last $d+1$ values of $r$, i.e, for $r = \binom{m+d}{d} - j$ for $j=0, 1, \dots , d$. 
\end{enumerate}

%The conjecture of Tsfasman and Boguslavsky is intimately related to the determination of higher weights %(also known as, generalized Hamming weights) of projective Reed-Muller codes, just as 
The theorem of Heijnen-Pellikaan  is  intimately related to the determination of higher weights (also known as, generalized Hamming weights) of Reed-Muller codes $\RM_q(d,m)$ and in fact, that was the original motivation of 
\cite{HP}. In a similar manner, the TBC is closely related to determination of higher weights of 
projective Reed-Muller codes  $\PRM_q(d,m)$. Indeed, if we let $e_r(d,m)$ denote the 
maximum number of common zeros in $\PP^m(\Fq)$ for a system of $r$ 
linearly independent homogeneous  polynomials in $S$ of degree $d$, then  the $r^{\rm th}$ generalized Hamming weight of  the $q$-ary projective Reed-Muller code of order $d$ and length $p_m$ %$\PRM_q(d,m)$ 
is given by
\begin{equation}
\label{erdr}
d_r\left(\PRM_q(d,m) \right) = p_m - e_r(d,m), \quad \text{ provided } d\le q. 
\end{equation}
%where $\PRM_q(d,m)$ denotes the $q$-ary projective Reed-Muller code of order $d$ and length $p_m$, and $d_r\left(\PRM_q(d,m) \right)$ its $r^{\rm th}$ %generalized Hamming 
%higher weight. 
In turn, coding theoretic results about $\PRM_q(d,m)$, such as the minimum distance of its dual, 
can be used to derive useful results concerning the TBC. This is the approach we take for constructing the smallest counterexample to TBC (of $5$ quadrics in $\PP^3$) as an alternative to using Zanella's theorem, and also for showing that the TBC holds in the affirmative for the last $d+1$ values of $r$. 
As for proving that the TBC holds in the affirmative when $r =3$, we tweak the arguments in \cite{DG1} 
to arrive at a proof that does not use the Heijnen-Pellikaan Theorem. By way of an application of our determination of some terminal higher weights of $\PRM_q(d,m)$   to finite geometry, we also show that Veronese varieties do not, in general, contain a projective line.  
In a section on preliminaries, we include a new and short proof of a basic bound due to Lachaud for the number of points of an equidimensional projective algebraic varieties of a given %dimension and 
degree. 
This result was used in \cite{DG1} and will also be used here.

\section{Preliminaries}

We shall continue to use the notations and terminology introduced in the previous section. In particular, $r,m,d$ are fixed positive integers and $p_k$ is as in \eqref{pk}. Denote by $\overline{\FF}_q$ a fixed algebraic closure of $\Fq$.  
For any field $\FF$ and any nonnegative integer $j$, we will denote by 
$\PP^j(\FF)$ the $j$-dimensional projective space over $\FF$, and by 
$\dP^j(\FF)$ the dual of $\PP^j(\FF)$, consisting of all hyperplanes in %the $j$-dimensional projective space 
$\PP^j(\FF)$. %= \PP^j (\Fq)$. 
We are mostly interested in the case $\FF=\Fq$ and we often write $ \PP^j (\Fq)$ and $ \dP^j (\Fq)$ simply as $\PP^j$ and $\dP^j$, respectively. 
%Note that $|\PP^j (\Fq)| = |\dP^j (\Fq)| = p_j$, where $p_j$ is as in \eqref{pm}. 
For any set $\mathscr{F}$ of homogeneous polynomials in $S:=\Fq[x_0,x_1. \dots , x_m]$, we denote by $\V(\mathscr{F})$ the set of common zeros in $\PP^m(\Fq)$ of the polynomials in $\mathscr{F}$. Likewise, for any 
$\mathscr{P} \subseteq \Fq[x_1. \dots , x_m]$, we denote by $\Z(\mathscr{P})$ the set of common zeros in $\AA^m(\Fq)$ of polynomials in $\mathscr{P}$. Sets such as $\V(\mathscr{F})$ and $\Z(\mathscr{P})$ are often referred to as projective varities and affine varieties, respectively. Thus we use the word \emph{variety} to  mean a Zariski closed subset in $\PP^m$ (or in $\AA^m$) that need not be irreducible. 
We call  such varieties to be \emph{irreducible} if the corresponding varieties over %the algebraic closure 
$\overline{\FF}_q$ %of $\Fq$ 
are irreducible. Likewise for any (affine or projective) variety $X$, by the \emph{dimension} of $X$, denoted $\dim X$, we mean the dimension of the corresponding variety $\overline{X}$ of $X$ and for any projective variety $X\subseteq \PP^m(\Fq)$, by the \emph{degree} of $X$, denoted $\deg X$, we mean the degree of $\overline{X} \subseteq \PP^m(\overline{\FF}_q)$. Recall that if $\FF$ is an algebraically closed field,  $Y$  a projective variety in $\PP^m(\FF)$ %\overline{\FF}_q)$ 
and  $Y_1, \dots , Y_k$ the irreducible components of $Y$, then 
$$
\dim Y = \max \{\dim Y_i : i=1, \dots , k\} \quad \text{and} \quad 
\deg Y =\mathop{ \sum_{1\le i\le k}}_{\dim Y_i = \dim Y} \deg Y_i.
$$
In particular, if $Y$ is \emph{equidimensional}, i.e., if all its irreducible components have the same dimension, then $\deg Y$ is the sum of degrees of its irreducible components.

The following simple, but useful, lemma appears to be classical. We learned it from Zanella \cite{Z}. The short proof given below was suggested by M. Homma and is sketched in \cite[Remark 2.3]{DG}. For alternative proofs, one may refer to \cite{DG}. 

\begin{lemma}
\label{Zanella}
Let $X \subseteq \mathbb{P}^{m}$ and $a:=\max\{ |X \cap {\Pi}| : {\Pi} \in \dP^m\}$. 
Then $|X| \le a q + 1$.
\end{lemma}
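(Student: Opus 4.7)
The plan is to prove the lemma via a double-counting argument on point--hyperplane incidences in $\PP^m(\Fq)$. I would count the set of incidences
$$
\mathcal{I} := \{(Q,\Pi) : Q \in X, \; \Pi \in \dP^m, \; Q \in \Pi\}
$$
in two complementary ways.

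First, for each fixed $Q \in \PP^m$, the hyperplanes through $Q$ correspond projectively to the nonzero linear forms on $\FF_q^{m+1}$ vanishing at $Q$, and these form an $m$-dimensional subspace; hence each $Q$ lies on exactly $p_{m-1}$ hyperplanes. This gives $|\mathcal{I}| = |X| \cdot p_{m-1}$. Second, since $|X \cap \Pi| \le a$ for every $\Pi$ by the very definition of $a$, and since $|\dP^m| = p_m$, we have $|\mathcal{I}| \le a \cdot p_m$. Combining and invoking the identity $p_m = q\, p_{m-1} + 1$, I obtain
$$
|X| \le \frac{a\, p_m}{p_{m-1}} = aq + \frac{a}{p_{m-1}}.
$$

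To conclude, I would use the trivial estimate $a \le p_{m-1}$, which holds because $a$ is by definition the cardinality of a subset of some hyperplane, and a hyperplane in $\PP^m$ has $p_{m-1}$ points. Therefore $a/p_{m-1} \le 1$, and since $|X|$ is an integer, $|X| \le aq + 1$.

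I do not anticipate any real obstacle: the argument rests entirely on the two elementary observations that each point of $\PP^m$ lies on $p_{m-1}$ hyperplanes and that $p_m = qp_{m-1}+1$. The only bookkeeping point is the inequality $a \le p_{m-1}$, needed to absorb the fractional slack in the counting, and this is immediate. One could try a more geometric route---projecting $X\setminus\{P\}$ from a point $P\in X$ onto a hyperplane and bounding the image---but there the difficulty is that the image has no obvious direct bound by $a$, whereas the incidence count proceeds with no case analysis.
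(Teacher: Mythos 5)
Your double-counting argument on point--hyperplane incidences is exactly the proof given in the paper: both count $|\mathcal{I}|$ as $|X|\,p_{m-1}$ on one side and bound it by $a\,p_m$ on the other, then use $p_m = qp_{m-1}+1$ together with $a \le p_{m-1}$ to conclude. The proposal is correct and essentially identical to the paper's proof (your appeal to the integrality of $|X|$ at the end is not even needed, since $a/p_{m-1}\le 1$ already suffices).
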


\begin{proof}
Consider the incidence set 
${\mathscr{I}}:= \{ (P, {\Pi}) \in X \times \dP^m : P \in {\Pi} \} $. Clearly,
$$
|{\mathscr{I}}| = \sum_{P\in X} |\{ {\Pi}\in \dP^m : P\in {\Pi}\}| =   |X| p_{m-1} \quad \text{and} \quad  |{\mathscr{I}}| = \sum_{{\Pi}\in \dP^m} |X\cap {\Pi}|  \le a p_m.
$$
%Using this and the identity 
Also %it is clear that 
since $p_m = qp_{m-1} +1$ and $a\le p_{m-1}$, we see that %. {\Pi}ence %, we obtain 
$|X|  \le a p_m/ p_{m-1} \le a q+1$. 
%, as desired. 
%Consequently, $|X|  \le a p_m/ p_{m-1} \le a q+1$, where the last inequality follows since $p_m = qp_{m-1} +1$. 
%in two ways using the first and the second projections, which yields $|X| p_{m-1} \le a p_m$, 
\end{proof}

The following result is classical and appears, for example, as an exercise in Hartshorne \cite[Ex. I.1.8]{H}, whose solution is easily obtained using Krull's Principal Ideal Theorem. The assertion about the degree can also be proved directly, %and also follows, for example, as a consequence of 
or it can be readily deduced from \cite[Thm. 7.7]{H}. 

\begin{lemma}\label{lemma:induc}
Let $\FF$ is an algebraically closed field and  $Z$ be an irreducible subvariety of $\PP^m(\FF)$. If $\Pi$ is any hyperplane in $\PP^m(\FF)$ such that $Z$ is not contained in $\Pi$, then $Z \cap \Pi$ is equidimensional of dimension $\dim Z - 1$, and moreover $\deg(Z\cap \Pi) \le \deg Z$. 
\end{lemma}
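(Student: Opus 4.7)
My plan is to establish the two assertions separately, treating the dimension claim via Krull's Principal Ideal Theorem and the degree bound via Hilbert polynomials.

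For the dimension/equidimensionality claim, I would fix a nonzero linear form $L \in \FF[x_0,\ldots,x_m]$ with $\V(L) = \Pi$, and consider the affine cone $CZ \subseteq \AA^{m+1}(\FF)$, which is an irreducible affine variety of dimension $\dim Z + 1$. The hypothesis $Z \not\subseteq \Pi$ means that $L$ does not vanish identically on $CZ$, and hence its image is a non-zerodivisor in the coordinate ring $\FF[CZ] = \FF[x_0,\ldots,x_m]/I(Z)$. Krull's Principal Ideal Theorem then forces every minimal prime of $(L)$ in $\FF[CZ]$ to have height one, so every irreducible component of $CZ \cap \V(L)$ has dimension $\dim Z$. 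Projectivizing, $Z \cap \Pi$ is equidimensional of dimension $\dim Z - 1$.

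For the degree bound, let $R = \FF[x_0,\ldots,x_m]/I(Z)$, which is a graded domain, and write $n := \dim Z$ and $d := \deg Z$, so that the Hilbert polynomial satisfies $P_R(t) = (d/n!)\, t^n + (\text{lower order terms})$. Since multiplication by $L$ on $R$ is injective, we have a short exact sequence of graded modules
$$0 \to R(-1) \xrightarrow{\,\cdot L\,} R \to R/LR \to 0,$$
from which $P_{R/LR}(t) = P_R(t) - P_R(t-1) = (d/(n-1)!)\, t^{n-1} + (\text{lower})$. Hence the scheme-theoretic intersection $\Proj(R/LR)$ has dimension $n-1$ and scheme-theoretic degree equal to $d$.

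To pass from the scheme to the variety, let $Y_1,\ldots,Y_k$ be the irreducible components of $Z \cap \Pi$, appearing in $\Proj(R/LR)$ with multiplicities $m_i \ge 1$ (lengths of the local rings at the generic points of the $Y_i$). By the first part, this scheme is equidimensional of dimension $n-1$, so the leading coefficient of its Hilbert polynomial is $\bigl(\sum_i m_i \deg Y_i\bigr)/(n-1)!$, whence $\sum_i m_i \deg Y_i = d$. Therefore
$$\deg(Z \cap \Pi) = \sum_i \deg Y_i \;\le\; \sum_i m_i \deg Y_i = d = \deg Z.$$
The main subtlety, as I see it, is precisely the bookkeeping between scheme-theoretic multiplicities (which the Hilbert polynomial counts) and the variety-theoretic degree (which forgets them, and so can only decrease); once this is clearly separated from the equidimensionality of the first part, the rest is routine.
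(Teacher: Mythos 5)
Your proof is correct and follows essentially the route the paper itself indicates: the paper does not write out a proof but points to Hartshorne's Exercise I.1.8 (solved via Krull's Principal Ideal Theorem applied to the affine cone) for the dimension statement, and to a direct argument or Hartshorne's Theorem 7.7 for the degree bound, which is exactly the Hilbert-polynomial computation with the short exact sequence $0 \to R(-1) \to R \to R/LR \to 0$ and the multiplicity bookkeeping that you carry out. The only point worth flagging is the degenerate case $\dim Z = 0$, where $Z \cap \Pi$ is empty; this is harmless in the paper's application since the lemma is only invoked for varieties not contained in any hyperplane, which forces positive dimension.
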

%
%\begin{proof}
%Let $\dim X = s$. Since $X \subsetneq \PP^m$, we have $s < m$. We claim that $\Pi \not\subseteq X$, since othewise $\dim X \ge \dim \Pi = m-1$ and hence by our hypothesis that $s < m$, we have $s = m - 1$; the irreducibility of $X$ implies, in this case, that $X = \Pi$ contradicting our assumption. Let $I:=I(X) = \langle f_1, \dots, f_r \rangle$ and \hbox{$S = k[x_0, \dots, x_m]$}. Since $X$ is irreducible, $S/I$ is an integral domain; since $\dim X = s$, we have $\dim S/I = s+1$, where $\dim S/I$ denotes the Krull dimension of $S/I$. Let $\Pi = V(h)$ for some linear homogeneous polynomial $h \in k[x_0, \dots, x_m]$. Note that  $I (X \cap \Pi) = \langle f_1, \dots, f_r, h \rangle$. Let $P$ be any minimal prime of $I(X \cap \Pi)$. By the correspondence of prime ideals, $P/I$ is a prime ideal of $S/I$. We also have $P/I = \langle \overline{h} \rangle$; since $S/I$ is Noetherian, by Krull's principal ideal theorem $\rm{ht} \ P/I \le 1$. The ring $S/I$ being an integral domain, the fact $\rm{ht} \  (P/I) = 0$ would imply that $P/I = 0$, which contradicts our assumption that $X \not\subseteq \Pi$. Hence $\rm{ht} \ P/I = 1$. 
%$$\dim k[x_0, \dots, x_m]/P = \dim \frac{(S/I)}{(P/I)} = \dim (S/I) - \rm{ht} \ (P/I) = (s+1) - 1 = s,$$  
%the first inequality follows from the isomorphism theorems of rings, while the second inequality follows from a well known result for finitely generated algebras over a field (see \cite{H}, Theorem 1.8A. (b)). This completes the proof.
%\end{proof}

As an application of Lemmas  \ref{Zanella} and %Lemma 
\ref{lemma:induc}, we prove a refined version of a theorem of Lachaud (cf. \cite[Prop. 12.1]{GL2}) for the number of points of projective varieties defined over $\Fq$. The statement and proof, as it appears in  \cite[Prop. 12.1]{GL2} is slightly erroneous, basically because the degree of a variety need not equal the sum of degrees of its irreducible components. But, 
as is noted in \cite{LR}, the result and the proof in \cite{GL2} is valid if the variety is assumed to be equidimensional. %, and also that Indeed, 
A counterexample in the non-equidimensional case is easily obtained by taking $X = \V(x_0x_1, x_0x_2)$ in $\PP^2$, i.e., $X=\V(x_0)\cup \V(x_1,x_2)$, which is the union of a (projective) line and a point outside it. A ``generic'' complementary dimensional linear subspace of $\PP^2$ meets $X$ in one point and thus $\deg X =1$ (alternatively, one can see this by computing %easily 
the Hilbert polynomial of the %vanishing 
homogeneous %ideal 
coordinate ring of $X$), whereas $|X(\Fq)|$ is clearly $q+2$, which is greater than $ 1\cdot p_1$. The proof below appears to be %somewhat 
a little simpler than that in \cite[Prop. 12.1]{GL2} or \cite[Prop. 2.3]{LR}. 

%original theorem by Lachaud states that a projective variety defined over $\Fq$ of dimension $s$ and degree $\delta$ has at most $\delta p_s$ many $\Fq$ rational points. We remark that the above statement is not true, in general. For example, if $X$ is a union of a line $L$ defined over $\Fq$ and an $\Fq$ rational point $a$ not contained in $\L$, then $X$ has $q+2$ rational points, while the Lachaud's bound is $q+1$. Here in the following proposition, we restrict our attention to equidimensional varieties in $\PP^m$ which need not be defined over $\Fq$. 
%%Note that we use the word \emph{variety} to  mean a Zariski closed subset in $\PP^m$ (or in $\AA^m$) that need not be irreducible. 

\begin{proposition}
\label{LachaudBound}
Let $X \subset \PP^m(\overline{\FF}_q)$ be an equidimensional variety of degree $\d$ and dimension $s$. Then $$|X(\Fq)| \le \d p_s.$$
\end{proposition}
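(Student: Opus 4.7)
The plan is to induct on the dimension $s$ of $X$. The base case $s=0$ is immediate, since a zero-dimensional equidimensional variety of degree $\delta$ is simply a union of $\delta$ points, whence $|X(\Fq)|\le \delta = \delta p_0$.

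For the inductive step, I first reduce to the irreducible case: writing $X = X_1 \cup \cdots \cup X_k$ into its $\overline{\FF}_q$-irreducible components (each of dimension $s$ by equidimensionality), one has $\delta = \sum_i \deg X_i$ and $|X(\Fq)| \le \sum_i |X_i(\Fq)|$, so it suffices to bound each $|X_i(\Fq)|$ by $(\deg X_i)\,p_s$.

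Now assume $X$ is irreducible of dimension $s$ and degree $\delta$. The idea is to apply Lemma~\ref{Zanella} inside a projective space in which $X$ is not contained in any $\Fq$-hyperplane. To this end, let $L$ be the smallest $\Fq$-linear subspace of $\PP^m$ containing $X$, and set $\ell = \dim L \ge s$. If $\ell = s$, then the irreducible variety $X$ must coincide with the $s$-dimensional linear variety $L$ over $\overline{\FF}_q$, forcing $\delta = 1$ and $|X(\Fq)| \le p_s$. Otherwise $\ell > s$, and by the minimality of $L$ no $\Fq$-hyperplane $\Pi$ of $L$ contains $X$. For every such $\Pi$, Lemma~\ref{lemma:induc} yields that $X\cap\Pi$ is equidimensional of dimension $s-1$ and degree at most $\delta$, so the inductive hypothesis gives $|(X\cap\Pi)(\Fq)|\le \delta p_{s-1}$. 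Invoking Lemma~\ref{Zanella} inside $L\cong \PP^\ell$ then produces
\[
|X(\Fq)| \le \delta p_{s-1}\, q + 1 = \delta p_s - \delta + 1 \le \delta p_s,
\]
using the identity $p_s = q p_{s-1} + 1$ and $\delta \ge 1$.

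The main obstacle — and the reason for replacing the ambient space $\PP^m$ by $L$ — is that Lemma~\ref{lemma:induc} fails for hyperplanes that contain $X$, while the Zanella inequality needs a uniform upper bound on $|X\cap\Pi|$ over every $\Fq$-hyperplane $\Pi$. Passing to the $\Fq$-linear span $L$ guarantees that each such $\Pi$ cuts $X$ properly, thereby avoiding any secondary induction on the ambient dimension $m$.
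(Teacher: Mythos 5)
Your proof is correct and follows essentially the same route as the paper's: reduce to irreducible components, bound hyperplane sections via Lemma~\ref{lemma:induc} and the inductive hypothesis, and conclude with Lemma~\ref{Zanella}. The only (cosmetic) difference is that you induct on $\dim X$ and dispose of the ``$X$ lies in a hyperplane'' difficulty once and for all by passing to the $\Fq$-linear span, whereas the paper inducts on the ambient dimension $m$ and treats that situation as a separate case absorbed by the induction on $m$.
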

\begin{proof}
%Suppose $X$ is a variety with dimension $s$ and degree $\d$. 
%The proof is by induction on $m$. 
Induct on $m$. The case $m = 1$ being trivial, assume that $m>1$ and that the result holds for varieties in
$ \PP^{m-1}(\overline{\FF}_q)$. % projective spaces of dimension $<m$.. 
We divide the proof in three cases. 

{\bf Case 1: }  $X$ is contained in a hyperplane $\Pi  \in \dP^m(\overline{\FF}_q)$.

In this case, $X$  %= X \cap \Pi$. Now $X \cap \Pi$ 
is a variety in $\Pi \simeq \PP^{m-1}(\overline{\FF}_q)$ of dimension $s$ and degree $\d$. 
Hence by the induction hypothesis,
$|X (\Fq) | \le \d p_s.$ %= |(X \cap \Pi) (\Fq)| 

{\bf Case 2: } $X$ is irreducible and not contained in any hyperplane in $\PP^m(\overline{\FF}_q)$. 

In this case,  $X$ is nonempty (so that $s\ge 0$ and $\delta \ge 1$). 
Moreover, given any $\Pi \in \dP^m(\overline{\FF}_q)$,
by   Lemma \ref{lemma:induc}, we see that $X \cap \Pi$ is an equidimensional subvariety of $\Pi$
% \simeq \PP^{m-1}(\overline{\FF}_q)$ 
with $\dim X \cap \Pi = s - 1$ and $\deg X\cap \Pi \le \d$. % for any  $\Pi \in \dP^m$. implies that  
So by induction hypothesis, $|(X \cap \Pi) (\Fq)| \le \d p_{s-1}$. % for any  $\Pi \in \dP^m$.  
%for any hyperplane $\Pi $.   By the \textbf{Zanella's lemma}, 
Thus, Lemma \ref{Zanella} implies that  $|X(\Fq)| \le q (\d p_{s-1}) + 1 \le  \d p_s.$ 
 
{\bf Case 3: } $X$ is  an arbitrary variety in $\PP^m(\overline{\FF}_q)$ of dimension $s$ and degree $\d$.

In this case, write $X = X_1 \cup \dots \cup X_j \cup X_{j+1} \cup \dots \cup X_k$, where 
$X_1, \dots , X_k$ are the irreducible components of $X$  such that none among 
$X_1, \dots, X_j$ is contained in any hyperplane of $\PP^m(\overline{\FF}_q)$ whereas each of $X_{j+1}, \dots, X_{k}$ are contained in some hyperplane of $\PP^m(\overline{\FF}_q)$.  Since $X$ is equidimensional, 
$\dim X_i = s$ for all $i=1, \dots , k$. 
%Let $\deg X_i = \d_i$ for $i=1, \dots, k$. 
 From Case 1 and Case 2, we obtain 
%for each $i = 1, 2, \dots, r$ we have 
$|X_i (\Fq)|  \le (\deg X_i) p_s$ for each $i=1, \dots k $. Consequently, %\le \d_i p_{s_i}
%$$
%|X_i (\Fq)| \le \d_i p_{s_i} \le \d_i p_s \quad \text{for } i=1, \dots k \quad \text{and hence} \quad 
%|X(\Fq)| \le \sum_{i=1}^r |X_i (\Fq)| \le  \left(\sum_{i=1}^k \d_i \right) p_s = \d p_s. 
%$$  
%Thus 
$$
|X(\Fq)| \le \sum_{i=1}^k |X_i (\Fq)| \le  \left( \sum_{i=1}^k \deg X_i\right) p_s = \d p_s.
$$
This completes the proof.
\end{proof}

For ease of later reference, we record below optimal bounds for the number of $\Fq$-rational points of affine or projective hypersurfaces defined over $\Fq$. For a proof, %of (i), 
one may refer to  \cite[p. 275]{LN} and  \cite[\S 2]{DG} or references therein. 
%, while a short proof of (ii) using Lemma \ref{Zanella} can be found in \cite{DG}. 

\begin{proposition}
\label{OreSerre}
%Assume that $d\le q$. 
Let $f\in \Fq[x_1, \dots , x_m]$ be %a polynomial 
of degree $d$ and let 
$F\in S$ be a nonzero homogeneous polynomial of degree $d$. 
% $\Z(f)$ denote the set of its zeros in $\AA^m(\Fq)$. Also let 
%$F\in S$ be a nonzero homogeneous polynomial of degree $d$ and
%  $\V(F)$ denote the set of its zeros in $\PP^m(\Fq)$. 
%Let $\Z(f)$ (resp. $\V(F)$) denote the set of all zeros of $f$ (resp. $F$) in $\AA^m(\Fq)$ (resp. $\PP^m(\Fq)$). 
Then 
\begin{enumerate}
\item[(i)]{\bf (Ore's Inequality)} If $d\le q$, then  $|\Z(f)| \le dq^{m-1}$. 
%, where $\Z(f)$ denotes the zero-set in $\AA^m(\Fq)$ of $f$. 

\item[(ii)]{\bf (Serre's Inequality)} If $d\le q+1$, then $|\V(F)| \le dq^{m-1} + p_{m-2}$. 
%, where $\V(F)$ denotes the zero-set in $\PP^m(\Fq)$ of $F$.
\end{enumerate}
\end{proposition}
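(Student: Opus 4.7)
The plan is to establish \textbf{(i)} by induction on $m$ and then bootstrap it into \textbf{(ii)} by inducting on $m$ with a dichotomy on whether $F$ has a linear factor over $\Fq$. Both bounds are sharp (e.g.\ for a union of $d$ hyperplanes in general position), so the argument has to remain tight at every step.

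For Ore's inequality, the base case $m=1$ is just the classical bound that a nonzero univariate polynomial of degree $d$ has at most $d$ roots. For the inductive step, pick a variable -- say $x_m$ -- for which $f$ has positive $x_m$-degree $e$, and write
\[
f = \sum_{i=0}^{e} f_i(x_1,\dots,x_{m-1})\, x_m^i \quad \text{with } f_e \neq 0.
\]
Project to $\AA^{m-1}$ by forgetting $x_m$: over points of $\AA^{m-1}\setminus\Z(f_e)$ the univariate polynomial $f(\cdot,x_m)$ has degree exactly $e$, so each such fiber contributes at most $e$ zeros, while over $\Z(f_e)$ we use the trivial bound of $q$ per fiber. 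Since $\deg f_e \le d-e \le q$, the induction hypothesis gives $|\Z(f_e)|\le (d-e)q^{m-2}$, and the combined estimate reduces to the algebraic inequality $e\le d$, completing the induction.

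For Serre's inequality, again induct on $m$; the case $m=1$ is immediate. If $F$ has a linear factor $\ell$ over $\Fq$, choose coordinates so that $\ell=x_0$ and write $F=\ell G$. Then $\V(F)\setminus\V(\ell)$ is the affine zero set in $\AA^m$ of the dehomogenization of $G$, a polynomial of degree $\le d-1\le q$, so \textbf{(i)} gives $|\V(F)\setminus\V(\ell)|\le (d-1)q^{m-1}$; adding $|\V(\ell)|=p_{m-1}$ produces exactly $dq^{m-1}+p_{m-2}$. Otherwise $F$ has no linear factor over $\Fq$, so no $\Fq$-rational hyperplane is contained in $\V(F)$; for every $H\in\dP^m$ the restriction $F|_H$ is a nonzero form of degree $d$ on $H\simeq\PP^{m-1}$, and the induction hypothesis in dimension $m-1$ yields $|\V(F)\cap H|\le dq^{m-2}+p_{m-3}$. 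Double counting the incidence set $\{(P,H):P\in\V(F)\cap H\}$ gives
\[
|\V(F)|\, p_{m-1} \;=\; \sum_{H\in\dP^m} |\V(F)\cap H| \;\le\; p_m\bigl(dq^{m-2}+p_{m-3}\bigr),
\]
and $p_m = q\,p_{m-1}+1$ together with $d\le q+1$ convert this, after one invokes the integrality of $|\V(F)|$, into the stated bound.

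The main delicacy will be the second case of \textbf{(ii)}: the incidence bound is only tight when $dq^{m-2}+p_{m-3}\le p_{m-1}$, which is precisely the condition $d\le q+1$, and the last summand $p_{m-2}$ is recovered only by rounding down to an integer at the end. Thus the two hypotheses $d\le q$ in \textbf{(i)} and $d\le q+1$ in \textbf{(ii)} are used exactly at the steps where the inductions would otherwise fail to close, and any weakening of either condition would break the argument.
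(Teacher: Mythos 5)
Your proof is correct, and it is worth noting that the paper itself gives no proof of Proposition \ref{OreSerre} --- it only cites \cite[p. 275]{LN} and \cite[\S 2]{DG} --- so yours is a genuinely self-contained supplement rather than a variant of an in-paper argument. Part (i) is the standard fibration proof of Ore's inequality, and the estimate $e q^{m-1} + (q-e)(d-e)q^{m-2} = dq^{m-1} - e(d-e)q^{m-2} \le dq^{m-1}$ indeed closes once $1\le e \le d \le q$. In part (ii), Case 2, your double count is exactly the paper's Lemma \ref{Zanella} ($|X| \le aq+1$ when every hyperplane section of $X$ has at most $a$ points, valid because $a \le p_{m-1}$); invoking it with $a = dq^{m-2} + p_{m-3}$, which satisfies $a \le p_{m-1}$ precisely when $d \le q+1$, gives $|\V(F)| \le aq + 1 = dq^{m-1} + qp_{m-3} + 1 = dq^{m-1} + p_{m-2}$ on the nose, so the appeal to integrality at the end is not actually needed. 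Two small points of hygiene: the aside ``$F$ has no linear factor over $\Fq$, so no $\Fq$-rational hyperplane is contained in $\V(F)$'' is not justified at the boundary $d = q+1$ (a form such as $x_1^qx_2 - x_1x_2^q$ vanishes at every $\Fq$-point of $\V(x_0)$ without being divisible by $x_0$), but your induction only uses the weaker and correct statement that the restriction $F|_H$ is a nonzero form of degree $d$, which is equivalent to $\ell_H \nmid F$, so the argument stands; and in Case 1 one should record that the dehomogenization of $G$ is nonzero of degree at most $d-1 \le q$, so that (i) is applicable (with the convention that a nonzero constant has empty zero set).
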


%\begin{proof}
%
%
%\begin{theorem}[Extension of Serre's inequality]
%Let $F(x_0, \dots, x_m) \in \overline{\Fq}[x_0, \dots, x_m]$ be a homogeneous polynomial of degree $d$. Let $\V(F)$ denote the set of $\Fq$-rational points of $\PP^m$ which lie on the hypersurface defined by $F$. If $d \le q+1$ then 
%$$|\V(F)| \le dq^{m-1} + p_{m-2}.$$
%\end{theorem}
%
%\begin{proof}
%For $\a = (\a_0, \dots, \a_m) \in \mathbb{N}^{m+1}$ let $X^{\a}$ denote the monomial $x_0^{\a_0} x_1^{\a_1} \dots x_m^{\a_m}$. Write $F = \sum_{i=1}^n c_{\a (i)} X^{\a (i)}$ where $c_{\a} \in \overline{\Fq}$. Note that the field extension $K/\Fq$ is a finite extension, where $K$ is the smallest field containing $\Fq$ and $c_{\a(1)}, \dots, c_{\a(n)}$. Suppose that $\dim_{\Fq} K = N$ and fix a basis $\{a_1, \dots, a_N\}$ of $K$ over $\Fq$. By writing $c_{\a (i)}$ as $\Fq$-linear combinations of $a_1, \dots, a_N$ we obtain,
%$$F = \sum a_j G_j,$$
%where $G_j \in \Fq[x_0, \dots, x_m]$.  Since $a_1, \dots, a_N$ are linearly independent we observe that an $\Fq$ rational point of $\PP^m$ is a zero of $F$ if and only if it is a zero of $G_j$ for each $j$. Since $F$ is a nonzero polynomial at least one of the $G_j$-s must be nonzero, we may assume without loss of generality that $G_1 \neq 0$. Consequently,
%$$|V(F)| \le |V(G_1)| \le dq^{m-1} + p_{m-2},$$
%where the last inequality follows from the Serre's inequality.
%\end{proof}

\section{TBC for Systems of Three Polynomial Equations}

It is easy to see that when $r\le m+1$, the expression in 
%Tsfasman-Boguslavsky bound $T_r(d,m)$ given by 
\eqref{TBr} simplifies to 
$$
T_r(d,m)= \begin{cases} p_{m-r} & \text{ if } \, d = 1 \text{ and } \, 1\le r \le m+1, \\
(d-1)q^{m-1} + p_{m-2} + \lfloor q^{m-r}\rfloor & \text{ if } \, d > 1 \text{ and } \, 1\le r \le m+1. \end{cases}
$$
In the case of homogeneous linear polynomials, i.e., when $d=1$, it is obvious that the TBC is true. 
Moreover, 
it is not difficult to see that the TBC is also true when $m=1$. % (and $d\leq$). 
See, for example, \cite[\S 2.1]{DG1}. % for a proof of these assertions. 
With this in view, we shall assume in this section that $d>1$ and $m>1$. 
For any nonnegative integer $j$,  denote by $S_j$ the $j^{\rm th}$ homogeneous component of $S =\Fq[x_0,x_1, \dots , x_m]$  consisting of homogeneous polynomials in $S$ of degree $j$ including the zero polynomial. 
The main result of this section is that the TBC  is true when  $r=3$. In other words, we prove the following. %theorem. 

\begin{theorem}
\label{TBC3}
Assume that $m>1$ and $1< d < q-1$. 
Then the maximum number of common zeros in $\PP^m(\Fq)$ that a system of %$3$ 
three linearly independent % homogeneous 
polynomials in $S_d$ 
%of degree $d$ in $S$ %$\Fq[x_0,x_1, \dots , x_m]$ is 
can have  is
\begin{equation}
\label{TB3}
 (d-1)q^{m-1} + p_{m-2} + \lfloor q^{m-3}\rfloor. 
\end{equation}
\end{theorem}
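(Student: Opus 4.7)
The plan is to establish Theorem \ref{TBC3} in two halves: an explicit construction realising \eqref{TB3}, and a matching upper bound by induction on $m$ (with $d=2$ handled as a separate base via Zanella's theorem).

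For the lower bound, since $d<q-1$ one can choose $d+1$ distinct scalars $\a_0,\dots,\a_d\in\Fq$, set $L_i:=x_0-\a_i x_1$, and define $\Phi:=L_0L_1\cdots L_{d-2}\in S_{d-1}$. Take $F_1=\Phi\,L_{d-1}$, $F_2=\Phi\,x_2$, and either $F_3=\Phi\,x_3$ (when $m\ge 3$) or $F_3=\Phi\,L_d$ (when $m=2$). These three degree-$d$ forms are manifestly linearly independent, and $\V(F_1,F_2,F_3)$ decomposes as the union of the $d-1$ hyperplanes $\V(L_0),\dots,\V(L_{d-2})$ (all passing through the fixed $\PP^{m-2}=\V(x_0,x_1)$) together with a residual locus ($\V(L_{d-1},x_2,x_3)$ when $m\ge 3$, or $\V(L_{d-1},L_d,x_2)$ when $m=2$). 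Inclusion--exclusion contributes $(d-1)q^{m-1}+p_{m-2}$ from the pencil of hyperplanes and an extra $p_{m-3}-p_{m-4}=\lfloor q^{m-3}\rfloor$ from the residual piece, matching \eqref{TB3}.

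For the upper bound, put $X=\V(F_1,F_2,F_3)$ and $W:=\mathrm{span}(F_1,F_2,F_3)\subseteq S_d$, and proceed by induction on $m$. The case $d=2$ for every $m$ is Zanella's theorem recalled in the introduction (with $k=m-1$, $\epsilon=m-2$), and small-$m$ base cases are handled by direct arguments, leaving the main inductive step for $d>2$. The key tool is Lemma \ref{Zanella} combined with a hyperplane-by-hyperplane case split: call $H=\V(L)$ \emph{good} when the restrictions $F_1|_H,F_2|_H,F_3|_H$ remain linearly independent as polynomials on $H\simeq \PP^{m-1}$, and \emph{bad} otherwise. If every hyperplane is good, then the inductive hypothesis gives $|X\cap H|\le T_3(d,m-1)$ for every $H$, and Lemma \ref{Zanella} immediately yields $|X|\le q\,T_3(d,m-1)+1=T_3(d,m)$. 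The auxiliary case in which $X$ itself contains a hyperplane is disposed of in advance by iteratively peeling off every hyperplane component, writing $F_i=P\,G_i$ with $\deg P\ge 1$, and reducing to a triple $G_1,G_2,G_3\in S_{d-j}$ having no common linear factor.

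The remaining case---no hyperplane in $X$ but bad hyperplanes present---is where the real difficulty lies. On a bad $H$ one has only the $r=2$ bound, namely Boguslavsky's theorem, giving $|X\cap H|\le T_2(d,m-1)$, which strictly exceeds $T_3(d,m-1)$ and spoils a naive application of Lemma \ref{Zanella}. The plan is to replace the maximum in Lemma \ref{Zanella} by a weighted average via the double-count
\[
|X|\,p_{m-1}=\sum_{H\in\dP^m}|X\cap H|\le (p_m-B)\,T_3(d,m-1)+B\,T_2(d,m-1),
\]
where $B$ is the number of bad hyperplanes, and to control $B$ by the factorisation structure of $W$: each nonzero $F\in W$ has at most $d$ linear factors, and enumerating the pairs $([F],H)$ with $L_H\mid F$ gives a first crude bound $B\le d\,p_2$. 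A sharper estimate, needed to force the right-hand side above below $T_3(d,m)\,p_{m-1}$, follows by applying Proposition \ref{LachaudBound} to the determinantal subvariety of $\dP^m$ cut out by the degeneracy of the restriction map $W\to S_d(H)$. This is exactly where \cite{DG1} invoked Heijnen--Pellikaan on the affine complement, and replacing that input by Proposition \ref{LachaudBound} is the ``tweak'' promised in the introduction. The principal obstacle is thus the combinatorial-geometric bookkeeping required to make the weighted double-count tight; I expect this to be the technically most delicate portion of the argument.
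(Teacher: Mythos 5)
Your construction for the lower bound is fine (it is essentially \cite[Lemma 6.1]{DG1}), but the upper bound has a structural gap that no amount of bookkeeping will repair. The weighted double-count
$|X|\,p_{m-1}=\sum_{H}|X\cap H|\le (p_m-B)\,T_3(d,m-1)+B\,T_2(d,m-1)$
fails precisely in the extremal configurations. Take $F_i=G\,G_i$ with $\deg G=d-1$ and $G_1,G_2,G_3$ linearly independent linear forms, which is a configuration attaining \eqref{TB3}. Then every hyperplane $\V(a_1G_1+a_2G_2+a_3G_3)$ is bad, so $B\ge p_2=q^2+q+1$; plugging this into your inequality (using $p_mT_3(d,m-1)=p_{m-1}T_3(d,m)-p_{m-1}+T_3(d,m-1)$ and $T_2(d,m-1)-T_3(d,m-1)=q^{m-3}-\lfloor q^{m-4}\rfloor$) makes the right-hand side strictly exceed $T_3(d,m)\,p_{m-1}$, so the averaging cannot close. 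A sharper estimate on $B$ via Proposition \ref{LachaudBound} cannot help, because $B$ genuinely is of order $q^2$ here. Worse, your case split budgets only $T_2(d,m-1)$ per bad hyperplane, but a bad hyperplane can have restricted rank $1$ or $0$: if $H$ divides $G$ then $X\cap \V(H)=\V(H)$ contributes $p_{m-1}$, not $T_2(d,m-1)$. Your proposed fix --- ``peeling off'' hyperplane components to reduce to $G_1,G_2,G_3$ with no common linear factor --- is exactly where the real work lies and is not a routine reduction: the naive estimate $|X|\le|\V(G)|+|\V(G_1,G_2,G_3)|$ overshoots $T_3(d,m)$ by roughly $p_{m-2}$ (already for $\deg G=1$ one gets $p_{m-1}+T_3(d-1,m)>T_3(d,m)$), so one must either control the intersection $\V(G)\cap\V(G_1,G_2,G_3)$ from below or analyze the cofactors much more finely. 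Finally, invoking Zanella's theorem for the $d=2$ base case defeats the stated purpose of giving a proof independent of that result.

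For comparison, the paper avoids induction on $m$ and hyperplane averaging altogether: it fixes the pairwise GCDs $F_{ij}$ and the global GCD $G$, disposes of the cases $\deg F_{ij}<d-1$ by Proposition \ref{LachaudBound} applied to the curve/surface $\V(F_i/F_{ij},F_j/F_{ij})$ as in \cite{DG1}, and in the hard case $\deg F_{ij}=d-1$ for all $i<j$ shows (Lemma \ref{reduction}) that either the cofactors $G_i$ are linear (handled by Lemma \ref{hc2} via inclusion--exclusion on $\V(G)\cup\V(G_1,G_2,G_3)$ and Serre's inequality) or $G_1=H_1H_2$, $G_2=H_2H_3$, $G_3=H_3H_1$, where $|\V(G_1,G_2,G_3)|$ is computed exactly (Lemma \ref{hc1}). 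That explicit control of the residual variety and of its intersection with $\V(G)$ is the ingredient your outline is missing.
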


As explained in the Introduction, this %theorem
 is a special case of \cite[Theorem 6.3]{DG1}. But the proof given here does not use the Heijnen-Pellikaan Theorem (HPT). In fact, we follow a strategy similar to that   in \cite{DG1}
%of the proof  in \cite{DG1} of the above theorem in the more general case of $r$ polynomials 
%with $r\le m+1$ (rather than $r=3$), 
and %then proceed to 
give different proofs of those steps whose  proof 
in \cite{DG1} depended on HPT. 

Assume, as in Theorem \ref{TBC3}, that $m>1$ and $1< d < q-1$.  
Proceeding %exactly 
as in the proof of \cite[Lemma 6.1]{DG1}, we readily see that there do exist 3 linearly independent  polynomials in $S_d$ whose number of common zeros in $\PP^m(\Fq)$ is given by \eqref{TB3}. 
Now let  $F_1, F_2, F_3$ be arbitrary linearly independent homogeneous polynomials of degree $d$ in $S$. To complete the proof of Theorem \ref{TBC3} it suffices to show that 
\begin{equation}
\label{Tbound3}
|\V(F_1, F_2, F_3)| \le (d-1)q^{m-1}+q^{m-2} + \lfloor q^{m-3} \rfloor.
\end{equation}
To this end, fix a GCD (= greatest common divisor) $G$ of $F_1, F_2, F_3$ and a GCD $F_{ij}$ of $F_i$ and $F_j$ for $1\le i <  j\le 3$. Also  let $G_1, G_2, G_3 \in S$ be such that $F_i = G G_i$ for $i = 1, 2, 3$.  Evidently, $\gcd (G_1, G_2,  G_3) = 1$, that is, $G_1, G_2, G_3$ are %relatively 
coprime. But they may not be pairwise coprime. So we fix  a GCD $G_{ij}$ of $G_i$ and $G_j$ for $1\le i <  j\le 3$.  Note that %, being factors of homogeneous polynomials,  
$G, G_i, F_{ij}, G_{ij}$ are all homogeneous polynomials. Let
$$
b: = \deg G, \quad  b_{ij} := \deg F_{ij} \quad \mathrm{and} \quad c_{ij} := \deg G_{ij}  \quad  \mathrm{for} \quad 1 \le i <  j \le 3. %, \ \mathrm{with} \ \  i \neq j.
$$ 
Note that $\deg G_i = d - b$ for $i = 1, 2, 3$ and $c_{ij}  = b_{ij} - b$ for $1\le i < j \le 3$. 
%As in \cite{DG1} we will refer to $b_{ij}$ as correlation factor of $F_i$ and $F_j$. 
The proof is divided into the following three exclusive and exhaustive cases:
\begin{enumerate}
\item[{\bf (a)}]
$b_{ij} = 0$ for some $i, j \in \{1, 2, 3 \}$ with $i < j$.
\item[{\bf (b)}]
$1 \le b_{ij} < d-1$ for some $i, j \in \{1, 2, 3 \}$ with $i < j$.
\item[{\bf (c)}]
$b_{ij} = d-1$ for all $i, j \in \{1, 2, 3 \}$ with $i < j$.
\end{enumerate}
In cases {\bf (a)} and {\bf (b)}, %referred collectively  as the case of low correlation, the desired result 
Theorem \ref{TBC3}, 
can be proved using the basic bound of Lachaud (Proposition \ref{LachaudBound} of the previous section) as is shown in Lemmas 4.1 and 4.2 of \cite{DG1}. Thus we restrict ourselves to the harder case, which is Case {\bf (c)}. Here the proof in \cite{DG1} is based on Lemma 2.5 of \cite{DG1}, which separates out the circumstance where the $F_i$'s have a common linear factor, or equivalently, $G$ has a linear factor, and further based on Lemmas 5.3 and 5.4 of \cite{DG1}, which deal, respectively, with the subcases $b=d-1$ and $b< d-1$ of Case {\bf (c)} under the assumption that $G$ has no linear factor. Of these, Lemmas 2.5 and 5.4 of \cite{DG1} use the HPT in an essential way. So we will now prove versions of these lemmas without recourse to HPT when $r=3$. In the remainder of this section, we shall use the following notation:
$$
X := \V (F_1, F_2, F_3), \quad  X' := \V(G_1, G_2, G_3) \quad \mathrm{and} \quad Y := \V(G).
$$
%First, 
Now here is a counterpart of  \cite[Lemma 5.3]{DG1} %in the situation 
when $G$ does have a linear factor.

\begin{lemma}
\label{hc2}
With notations as above, suppose  
$b = d-1$ and  $G$ has a linear factor. Then \eqref{TB3} holds, i.e., $|X|\le (d-1)q^{m-1}+q^{m-2} + \lfloor q^{m-3} \rfloor$.
%\begin{equation}
%|\V(F_1, F_2, F_3)| \le (d-1)q^{m-1}+q^{m-2} + \lfloor q^{m-3} \rfloor.
%\end{equation}
\end{lemma}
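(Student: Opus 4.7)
The plan is to use the linear factor of $G$ to peel off a distinguished hyperplane from $X$, thereby reducing the projective count to an affine one that can be handled by Ore's inequality combined with a short rank argument on three affine linear forms.

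After a linear change of coordinates we may assume that $G = x_0 H$ for some homogeneous $H \in S$ of degree $d-2$. Since each $F_i = G G_i = x_0 H G_i$ vanishes on the hyperplane $\V(x_0)$, we have $\V(x_0) \subseteq X$, and hence
\begin{equation*}
|X| \;=\; p_{m-1} + |X \cap U_0|, \qquad U_0 := \PP^m(\Fq)\setminus \V(x_0) \simeq \AA^m(\Fq).
\end{equation*}
Dehomogenizing at $x_0 = 1$, write $h := H(1, x_1, \dots, x_m)$ (nonzero, of degree at most $d-2$) and $g_i := G_i(1, x_1, \dots, x_m)$ (degree at most $1$). Then $F_i|_{U_0} = h g_i$, so
\begin{equation*}
X \cap U_0 \;=\; \Z(h g_1, h g_2, h g_3) \;=\; \Z(h) \;\cup\; \Z(g_1, g_2, g_3).
\end{equation*}

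Ore's inequality (Proposition \ref{OreSerre}(i)) applied to $h$, whose degree is at most $d-2 < q$, yields $|\Z(h)| \le (d-2)q^{m-1}$. For the second piece, the linear independence of $F_1, F_2, F_3$ together with $G \ne 0$ forces $G_1, G_2, G_3 \in S_1$ to be linearly independent; since dehomogenization at $x_0 = 1$ is an injection on $S_1$, the same is true of $g_1, g_2, g_3$. Writing the system $g_1 = g_2 = g_3 = 0$ as $A\,(1, x_1, \dots, x_m)^{T} = 0$ with $A \in M_{3\times(m+1)}(\Fq)$ of rank $3$, a dimension count shows that $|\Z(g_1, g_2, g_3)| \le \lfloor q^{m-3}\rfloor$ (the bound being $0$ when $m = 2$, where the three $g_i$ already span the full space of affine linear polynomials).

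Combining the two estimates and using the identity $p_{m-1} + (d-2)q^{m-1} = (d-1)q^{m-1} + p_{m-2}$ yields
\begin{equation*}
|X| \;\le\; (d-1)q^{m-1} + p_{m-2} + \lfloor q^{m-3}\rfloor,
\end{equation*}
which is the bound required by \eqref{TB3}. The only step where an appeal to the Heijnen--Pellikaan Theorem is replaced is the rank argument on $(g_1, g_2, g_3)$; this is precisely the trivial $d = 1$ case of HPT, and the main thing to watch is that the linear independence really does pass from the $F_i$'s to the $G_i$'s and then to the affine polynomials $g_i$ --- all of which is elementary linear algebra.
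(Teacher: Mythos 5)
Your proof is correct, and it takes a genuinely different route from the one in the paper. The paper stays entirely projective: writing $X = Y \cup X'$ with $Y = \V(G)$ and $X' = \V(G_1, G_2, G_3)$, it bounds $|Y| \le (d-1)q^{m-1} + p_{m-2}$ by Serre's inequality, computes $|X'| = p_{m-3}$ exactly (the $G_i$ being three linearly independent linear forms), and uses the linear factor $H$ of $G$ only to guarantee a large overlap, $|Y\cap X'| \ge |\V(G_1,G_2,G_3,H)| \ge p_{m-4}$; inclusion-exclusion then gives $(d-1)q^{m-1}+p_{m-2}+p_{m-3}-p_{m-4}$, which is exactly the bound \eqref{TB3}. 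You instead exploit the linear factor to place an entire hyperplane $\V(x_0)$ inside $X$, contributing exactly $p_{m-1}$, and then work affinely on the complement, splitting $\Z(hg_1,hg_2,hg_3)=\Z(h)\cup\Z(g_1,g_2,g_3)$ and invoking Ore's inequality for $h$ (of degree $\le d-2 < q$; in the degenerate case $d=2$ the polynomial $h$ is a nonzero constant and $\Z(h)$ is empty, which is consistent with the bound $(d-2)q^{m-1}=0$) together with the rank argument for the three linearly independent affine-linear forms $g_i$. All the linear-independence transfers you flag do go through ($\sum c_iG_i=0$ implies $\sum c_iF_i=0$ since $F_i=GG_i$ with $G\ne 0$, and dehomogenization is injective on $S_1$), and your closing identity $p_{m-1}+(d-2)q^{m-1}=(d-1)q^{m-1}+p_{m-2}$ is correct. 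As for what each approach buys: the paper's version fits the uniform $X=Y\cup X'$ decomposition used throughout the treatment of Case (c) and leans on Serre's inequality, which is already a standing tool there; yours replaces Serre by the weaker Ore inequality plus elementary linear algebra and makes the role of the hypothesis that $G$ has a linear factor completely transparent, at the modest cost of a coordinate change and a dehomogenization step.
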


\begin{proof}
Since $b=d-1$ and since $F_1, F_2, F_3$ are linearly independent,  the factors $G_1, G_2, G_3$ are homogeneous linear as well as linearly independent. 
Thus  $X'$ is (isomorphic to) a projective space of dimension $m-3$. Moreover, if $H$ is a linear factor of $G$, then $Y\cap X'$ contains $\V(G_1, G_2, G_3, H)$, which 
is (isomorphic to) a projective space of dimension $\ge m-4$. Since $X=Y\cup X'$, %Thus 
applying part (ii) of Proposition \ref{OreSerre} (Serre's inequality) to %the hypersurface 
$Y=\V(G)$, we obtain
\begin{equation*}
\begin{split}
|X|  = |Y \cup X'| & = |Y| + |X'| - |Y \cap X'| \\
& \le (d-1)q^{m-1} + p_{m-2} + p_{m-3} - p_{m-4} \\
& = (d-1)q^{m-1} + p_{m-2} + \lfloor q^{m-3} \rfloor, 
\end{split}
\end{equation*}
as desired. 
\end{proof}

%To prove a common analogue of Lemmas 2.5 and 5.4 of \cite{DG1} that 
To take care of Case {\bf (c)} when $b<d-1$ and regardless of whether or not $G$ has a linear factor, we first need a reduction given by the following. 
%argument that shows that if $b<d-1$, then in the case of 3 polynomials we must have $b=d-2$ and $F_1, F_2, F_3$ of a particularly simple type. 

\begin{lemma}
\label{reduction}
With notations as above, suppose $b_{ij} = d-1$ for all $i, j \in \{1, 2, 3 \}$ with $i < j$ and suppose $b< d-1$. Then $b=d-2$ and there exist homogeneous linear polynomials $H_1, H_2, H_3 \in S$ with
 no two of $H_1, H_2, H_3$ differ by a constant (in $\Fq$) such that $G_1 = H_1 H_2, G_2 = H_2 H_3$ and $G_3 = H_3 H_1$. 
\end{lemma}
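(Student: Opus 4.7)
The plan is to exploit the degree identities $c_{ij}=b_{ij}-b=d-1-b$ together with the coprimality condition $\gcd(G_1,G_2,G_3)=1$ to force $b=d-2$, and then to read off the factorizations of $G_1,G_2,G_3$ from the $G_{ij}$'s. First I would observe that since $G_{ij}$ divides $G_i$ with $\deg G_i=d-b$ and $\deg G_{ij}=d-1-b$, the quotient $G_i/G_{ij}$ is a homogeneous linear polynomial; in particular each $G_i$ is divisible by linear forms coming from two different pairs.

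The key algebraic step is to show that $G_{12}$, $G_{13}$, $G_{23}$ are pairwise coprime. For this I would use $\gcd(G_1,G_2,G_3)=1$: any common factor of $G_{12}$ and $G_{13}$ would divide $G_1,G_2,G_3$ simultaneously, and similarly for the other pairs. Consequently $G_{12}G_{13}$ is a divisor of $G_1$, which yields $2(d-1-b)\le d-b$, i.e.\ $b\ge d-2$; combined with the hypothesis $b<d-1$, this forces $b=d-2$. Then $\deg G_i=2$ and $\deg G_{ij}=1$ for all $i<j$.

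Having pinned down the degrees, each $G_i$ is a homogeneous polynomial of degree $2$ divisible by the two coprime linear forms $G_{ij}$ and $G_{ik}$ (where $\{i,j,k\}=\{1,2,3\}$), so $G_i$ must equal $G_{ij}G_{ik}$ up to a nonzero scalar. I would then set $H_1:=G_{23}$, $H_2:=G_{13}$, $H_3:=G_{12}$, absorbing the three scalars into (any choice of) the $H_k$'s so that $G_1=H_2H_3$, and similarly for $G_2,G_3$; a quick relabelling matches the asserted pattern $G_1=H_1H_2$, $G_2=H_2H_3$, $G_3=H_3H_1$.

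The last thing to check is that no two of $H_1,H_2,H_3$ differ by a constant. If say $H_1$ and $H_2$ were proportional, then the underlying linear form would divide both $G_{13}$ and $G_{23}$, hence (by the definition of the $G_{ij}$ as gcd's) would divide all of $G_1,G_2,G_3$, contradicting $\gcd(G_1,G_2,G_3)=1$. This step is essentially bookkeeping; the main obstacle of the lemma is really the coprimality-plus-degree argument that pins down $b=d-2$, and that is where I would concentrate the care in the write-up.
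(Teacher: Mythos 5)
Your argument is correct and reaches the conclusion by a genuinely different, and in fact more economical, route than the paper's. The paper first shows that each $G_i$ splits into $\deg G_i$ pairwise non-proportional linear factors (ruling out irreducible factors of degree $\ge 2$ and repeated linear factors by playing the degree $k-1$ of the pairwise gcds against the coprimality of $G_1,G_2,G_3$), then writes $G_1=H_1\cdots H_{k-1}H_k$ and $G_2=H_1\cdots H_{k-1}H_{k+1}$ and derives the contradiction for $k\ge 3$ from the fact that $G_3$ would be forced to contain one of the shared factors $H_1,\dots,H_{k-1}$. Your route --- pairwise coprimality of the $G_{ij}$, hence $G_{12}G_{13}\mid G_1$, hence $2(d-1-b)\le d-b$ --- pins down $b=d-2$ in a single degree count, and the splitting of each $G_i$ into two non-proportional linear forms then comes for free because $G_{ij}G_{ik}$ and $G_i$ have the same degree; you never need to discuss irreducible quadratic factors or squares of linear forms. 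One caveat, which your write-up shares with the paper's own proof: the divisibility only gives $G_i=c_iG_{ij}G_{ik}$ for nonzero scalars $c_i$, and ``absorbing the scalars'' is not always literally possible --- rescaling the three linear forms by $\lambda_1,\lambda_2,\lambda_3$ forces $(\lambda_1\lambda_2\lambda_3)^2=c_1c_2c_3$, so all three constants can be normalized to $1$ only when $c_1c_2c_3$ is a square in $\Fq$. Thus the identities $G_1=H_1H_2$, $G_2=H_2H_3$, $G_3=H_3H_1$ should be read up to nonzero constants; this is harmless for the intended application in Lemma \ref{hc1}, where only the zero set $\V(G_1,G_2,G_3)=\V(H_1H_2,H_2H_3,H_3H_1)$ is used.
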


\begin{proof}
Write $b=d-k$ and note that $k\ge 2$. %Then $c_{ij}  = k-1$ for $1 \le i < j \le 3$. 
We claim that each $G_i$ is a product of $k$ linear factors in $S_1$, no two of which differ by a constant. 
Indeed, if some  $G_i$ had an irreducible factor $Q\in S$ with $\deg Q \ge 2$, then since $\deg G_i = k$ and  the degree of a GCD of $G_i$ and $G_j$ is $k-1$ for each $j\ne i$, it follows that $Q$ divides $G_j$ for all $j\ne i$. But then this contradicts the fact that $G_1, G_2, G_3$ are coprime. In a similar manner if $H^2$ divides $G_i$ for some $i$ and some nonzero $H\in S_1$, then $H$ divides $G_j$ for all $j\ne i$, again leading to a contradiction. This proves the claim. 

%In view of the above claim, let us 
Thus we can write $G_1 = H_1 \dots H_{k-1} H_k$ and $G_2 = H_1 \dots H_{k-1} H_{k+1}$ where $H_1, \dots , H_{k+1} \in S_1$ and no two of them differ by a constant. 
Suppose, if possible, $b< d-2$, i.e., $k\ge 3$. Then at least one among $H_1, \dots H_{k-1}$ must be a factor of $G_3$. This contradicts the fact that $G_1, G_2, G_3$ are coprime. Hence we must have $k=2$, i.e., $b=d-2$, and moreover $G_1 = H_1 H_2, G_2 = H_2 H_3$ and $G_3 = H_3 H_1$. 
\end{proof}

In view of the discussion and the results above, it remains to prove the following. 

\begin{lemma}
\label{hc1}
%Let $1 < d < q$ and $m \ge 2$. Let $F_1, F_2, F_3$ be linearly independent elements in $S_d$ such that 
With notations as above, suppose  $b_{ij} = d-1$ for all $i, j \in \{1, 2, 3 \}$ with $i < j$ and $b < d-1$. 
Then \eqref{TB3} holds; in fact, $|X| < (d-1)q^{m-1}+q^{m-2} + \lfloor q^{m-3} \rfloor$.
\end{lemma}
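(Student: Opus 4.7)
The plan is to use Lemma~\ref{reduction} to put $F_1, F_2, F_3$ in a very explicit form and then bound $|X|$ via a simple union argument, without invoking the Heijnen--Pellikaan Theorem. By Lemma~\ref{reduction}, the hypothesis forces $b = d-2$ and the existence of pairwise non-proportional $H_1, H_2, H_3 \in S_1$ with $F_1 = G H_1 H_2$, $F_2 = G H_2 H_3$, and $F_3 = G H_3 H_1$. Since $F_i = G G_i$, one has $\V(F_i) = Y \cup \V(G_i)$ for each $i$, and hence $X = Y \cup X'$ where $Y = \V(G)$ and $X' = \V(G_1, G_2, G_3)$.

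The main step is an explicit description of $X'$. A point $P \in \PP^m(\Fq)$ lies in $X'$ iff each of $H_1H_2$, $H_2H_3$, $H_3H_1$ vanishes at $P$, equivalently iff at least two of $H_1(P), H_2(P), H_3(P)$ are zero; hence
$$
X' = \V(H_1, H_2) \cup \V(H_2, H_3) \cup \V(H_1, H_3).
$$
When $H_1, H_2, H_3$ are linearly independent, each $\V(H_i, H_j) \cong \PP^{m-2}$ and all three pairwise intersections equal $\V(H_1, H_2, H_3) \cong \PP^{m-3}$, so inclusion--exclusion gives $|X'| = 3p_{m-2} - 2p_{m-3}$; when they are linearly dependent but still pairwise non-proportional, the three subspaces coincide with $\V(H_1, H_2)$ and $|X'| = p_{m-2}$. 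Either way $|X'| \le 3p_{m-2} - 2p_{m-3}$.

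For $|Y|$, I will apply Serre's inequality (Proposition~\ref{OreSerre}(ii)) to $G$, which has degree $d-2$, obtaining $|Y| \le (d-2)q^{m-1} + p_{m-2}$; this is trivially valid when $d = 2$, since then $Y = \emptyset$. Adding,
$$
|X| \le |Y| + |X'| \le (d-2)q^{m-1} + 4p_{m-2} - 2p_{m-3}.
$$
The target $(d-1)q^{m-1} + q^{m-2} + \lfloor q^{m-3} \rfloor$ exceeds this by $q^{m-1} - 3q^{m-2} + \lfloor q^{m-3}\rfloor - 2p_{m-3}$, and since $1 < d < q-1$ forces $q \ge 4$ this difference is easily checked to be positive: for $m = 2$ it reduces to $q - 3 \ge 1$, while for $m \ge 3$ it equals $q^{m-1} - 3q^{m-2} - q^{m-3} - 2p_{m-4}$, which is positive because $q^{m-2}(q-3) \ge q^{m-2}$ dominates $q^{m-3} + 2p_{m-4}$ for $q \ge 4$. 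The only mild obstacle will be noticing that this crude union bound, which throws away the substantial overlap $Y \cap X'$, still beats the target; the one-unit saving in the coefficient of $q^{m-1}$ produced by the factorization $F_i = G \cdot H_a H_b$ is precisely what absorbs the $\sim 3q^{m-2}$ cost of the triple union, and explains why no HPT-type input is required when $r = 3$.
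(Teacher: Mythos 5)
Your proposal is correct and takes essentially the same route as the paper: reduce via Lemma~\ref{reduction} to $G_1=H_1H_2$, $G_2=H_2H_3$, $G_3=H_3H_1$, bound $|X'|$ by splitting into the linearly dependent and independent cases (your inclusion--exclusion count $3p_{m-2}-2p_{m-3}$ is exactly the paper's direct count $2q^{m-2}+p_{m-2}$), apply Serre's inequality to $Y=\V(G)$, and check that the crude bound $|X|\le |X'|+|Y|$ beats the target using $q\ge 4$. No gaps.
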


\begin{proof}
By Lemma \ref{reduction}, $b=d-2$ and moreover, 
$G_1 = H_1 H_2$, $G_2 = H_2 H_3$ and $G_3 = H_3 H_1$ for some $H_1, H_2, H_3 \in S_1$, no two of  which differ by a constant. %Two cases can arise. 
We will now estimate $|X'|$ by considering two cases. 

{\bf Case 1: } $H_1, H_2, H_3$ are linearly dependent.

By assumption, any two among $H_1, H_2, H_3$ are linearly independent. So 
in this case, we can write  $H_3 = \lambda H_1+\mu  H_2$ for some $ \lambda, \mu \in \Fq$ with $ \lambda\ne 0$ and $\mu \ne 0$.~Hence 
$$
X' = \V(H_1H_2, H_2H_3, H_3H_1) = \V(H_1, \mu  H_2^2) \cup \V(H_2,  \lambda H_1^2) = \V(H_1, H_2).
$$
In particular, 
$|X'| = p_{m-2}$.

{\bf Case 2: } $H_1, H_2, H_3$ are linearly independent.

By a linear change of coordinates, we may assume that $H_1=x_0$, $H_2=x_1$ and $H_3=x_2$. 
 Let $P = [c_0: c_1: \dots : c_m] \in \PP^m$ be a common zero of $x_0x_1, x_1 x_2, x_2 x_0$. 
Considering the %cases 
possibilities (i) $c_0=1$, (ii) $c_0=0$ and $c_1=1$, and (iii) $c_0=c_1=0$ separately, we 
see that $P$ can be chosen in exactly $q^{m-2} + q^{m-2} + p_{m-2}$ ways. Thus in this case, 
 $|X'| = 2q^{m-2} + p_{m-2}$. 

%Considering separately the cases when $H_1=0$ and $H_1\ne 0$, we see that 
%$$
%X' = \V(H_1H_2, H_2H_3, H_3H_1) = \V(H_1, H_2) \cup \V(H_1, H_3) \cup Z,
%$$
%where $Z$ is the zero set  in the affine space $\AA^{m}$ corresponding to $H_1=1$ of the polynomials in $m$ variables corresponding to $H_2$ and $H_3$. It is clear that 
%$$
%|\V(H_1, H_2) \cup \V(H_1, H_3)|  = 2p_{m-2} - |\V(H_1, H_2, H_3) | \le 2 p_{m-2} - p_{m-3} = q^{m-2} + p_{m-2}.
%$$
%To estimate $|Z|$, we assume without loss of generality that $H_1=x_0$. Let $h_2, h_3\in \Fq[x_1, \dots , x_m] 
%
%$ is the union of the hypersurface $V(H_2H_3) in  \simeq \PP^{m-2}$ and the zero set, say $Z$, in the affine space $\AA^{m-1}$ corresponding to $H_1=1$ and $H_2=0$ of $H_3$, which would be a polynomial of degree $\le 1$. Hence 

%Combinining Case 1 and Case 2, we note that 
Thus we always have $|X'| \le 2q^{m-2} + p_{m-2}$. Also, as a consequence of Serre's inequality we have $|Y| \le (d-2) q^{m-1} + p_{m-2}$. Hence 
\begin{equation*}
\begin{split}
|X| & \le |X'| + |Y| \\
& \le 2q^{m-2} + p_{m-2} + (d-2) q^{m-1} + p_{m-2} \\
& = (d-2) q^{m-1} + 2q^{m-2} + 2 p_{m-2} \\
\end{split}
\end{equation*} 
The lemma will be proved if we show that 
%We end the proof of the lemma by showing that 
$$
(d-1)q^{m-1}  + p_{m-2} + \lfloor q^{m-3} \rfloor > (d-2) q^{m-1} + 2q^{m-2} + 2 p_{m-2}.
$$
To this end, let us %can be seen by 
consider the difference of the above two expressions. %sides as follows. %We observe that,
\begin{equation*}
\begin{split}
& (d-1)q^{m-1}  + p_{m-2} + \lfloor q^{m-3} \rfloor  -\big( (d-2) q^{m-1} + 2q^{m-2} + 2 p_{m-2} \big) \\
 = & \ q^{m-1} - p_{m-2} - 2q^{m-2} + \lfloor q^{m-3} \rfloor \\
 \ge &  \ \frac{q^{m-1}(q-1) - (q^{m-1} -1) + 2q^{m-2} (q-1)}{q-1}  + \lfloor q^{m-3} \rfloor \\
 = & \ \frac{q^m - 4 q^{m-1} + 2 q^{m-2}}{q-1}   + \lfloor q^{m-3} \rfloor .\\
%> & \ 0.
\end{split}
\end{equation*}
The above quantity is strictly positive %where the last inequality holds 
since $q \ge 4$, thanks to the assumption $d>1$ and $1<d<q-1$.
This completes the proof.  
%The last inequality follows from the assumption that $q \ge 4$. 
\end{proof}

{\sc Proof of Theorem \ref{TBC3}.} In view of the earlier discussion, the theorem follows from Lemmas  \ref{hc2} and \ref{hc1} together with Lemmas 4.1, 4,2, 5.3 and 6.1 of \cite{DG1}. \qed

\medskip

We remark that the above proof can also be adapted to the case of two linearly independent homogeneous polynomials. This would, in fact, be a somewhat simpler way of proving Boguslavsky's theorem \cite[Theorem 2]{Bog}. At any rate,  Theorem \ref{TBC3} together with Boguslavsky's theorem and Serre's inequality shows that 
\begin{equation}
\label{erupto3}
e_r(d,m) = (d-1)q^{m-1}  + p_{m-2} + \lfloor q^{m-r} \rfloor  \quad \text{ for } 1\le r \le 3 \text{ and } 1 < d < q-1.
\end{equation}

\begin{corollary}[Special Case of HPT] Let $\delta$ be a positive integer $ < q-2$. 
%Let $f_1, f_2, f_3$ be 
For any linearly independent polynomials  $f_1, f_2, f_3 \in \Fq[x_1, \dots , x_m]$ of degree $\le \d$,  %Then 
$$
|\Z(f_1, f_2, f_3)|  \le (\d-1) q^{m-1}  + \lfloor q^{m-3} \rfloor.
$$
\end{corollary}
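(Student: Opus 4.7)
The plan is to derive the affine bound of the corollary directly from the projective identity \eqref{erupto3} by applying it not to the naive homogenizations of $f_1,f_2,f_3$ themselves but to those homogenizations inflated by an extra factor of $x_0$.

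Specifically, I let $F_i\in S$ be the degree-$\delta$ homogenization of $f_i$, i.e., $F_i(x_0,\dots,x_m):=x_0^{\delta}f_i(x_1/x_0,\dots,x_m/x_0)$, and introduce $F_i':=x_0F_i$, which is homogeneous of degree $d:=\delta+1$. Linear independence of $f_1,f_2,f_3$ passes first to $F_1,F_2,F_3$ (by specialization $x_0=1$) and then to $F_1',F_2',F_3'$ (since $x_0$ is a nonzerodivisor in $S$). The hypothesis $0<\delta<q-2$ reads exactly as $1<d<q-1$, so \eqref{erupto3} applies and yields
$$
|\V(F_1',F_2',F_3')|\le e_3(d,m)=\delta q^{m-1}+p_{m-2}+\lfloor q^{m-3}\rfloor.
$$

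The key step is the set-theoretic identity $\V(F_1',F_2',F_3')=\V(x_0)\cup\V(F_1,F_2,F_3)$, which is immediate from $F_i'=x_0F_i$. Since the standard embedding $\AA^m(\Fq)\hookrightarrow\PP^m(\Fq)$ via $x_0=1$ identifies $\Z(f_1,f_2,f_3)$ with $\V(F_1,F_2,F_3)\setminus\V(x_0)$, a one-line inclusion--exclusion collapses the union count to
$$
|\V(F_1',F_2',F_3')|=|\V(x_0)|+|\Z(f_1,f_2,f_3)|=p_{m-1}+|\Z(f_1,f_2,f_3)|.
$$
Combining with the previous display and using $p_{m-1}-p_{m-2}=q^{m-1}$ gives $|\Z(f_1,f_2,f_3)|\le(\delta-1)q^{m-1}+\lfloor q^{m-3}\rfloor$, which is the claim.

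Since \eqref{erupto3} supplies the essential content, I anticipate no substantial obstacle. The whole argument hinges on the observation that multiplying by $x_0$ replaces the parasitic projective term $p_{m-2}$ by the larger affine term $p_{m-1}$ coming from the adjoined hyperplane $\V(x_0)$, and the difference $p_{m-1}-p_{m-2}=q^{m-1}$ is precisely what is needed to offset the upward shift $\delta\to d=\delta+1$ in the leading coefficient of the projective bound, which also explains the otherwise curious constraint $\delta<q-2$ (as opposed to $\delta<q-1$) in the hypothesis.
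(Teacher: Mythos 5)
Your proposal is correct and is essentially the paper's own argument: homogenize to degree $\d$, multiply by $x_0$ to get three linearly independent forms of degree $\d+1$, apply the $r=3$ projective bound (Theorem \ref{TBC3} / \eqref{erupto3}), and subtract the $p_{m-1}$ points contributed by the hyperplane $\V(x_0)$. The only difference is cosmetic — you phrase the point count as a union $\V(x_0)\cup\V(F_1,F_2,F_3)$ where the paper splits $\PP^m$ into $\V(x_0)$ and its complement — and your closing remark correctly identifies why the hypothesis is $\d<q-2$ rather than $\d<q-1$.
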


\begin{proof}
Homogenize $f_1, f_2 , f_3$ using the extra variable $x_0$  to obtain $3$ linearly independent polynomials, say $F^*_1, F^*_2 , F^*_3$, in $S_{\d}$. Let $\widetilde{F}_i := x_0F^*_i$ for $i=1, 2,3$. Using Theorem \ref{TBC3}  applied to be $\widetilde{F}_1, \widetilde{F}_2, \widetilde{F}_3$ in $S_{\d+1}$, we see that 
$$
| \V(\widetilde{F}_1, \widetilde{F}_2, \widetilde{F}_3)| \le \d q^{m-1}  + p_{m-2} + \lfloor q^{m-3} \rfloor .
$$ 
On the other hand, intersecting $\V(\widetilde{F}_1, \widetilde{F}_2, \widetilde{F}_3)$ with the hyperplane $\V(x_0)$ and its complement, we find that 
$
| \V(\widetilde{F}_1, \widetilde{F}_2, \widetilde{F}_3)| = p_{m-1} + \left| \Z(f_1, f_2 , f_3)\right|$.  
 %=\left| \V(\widetilde{F}_1, \widetilde{F}_2, \widetilde{F}_3) \cap \{x_0=0\} \right| = $$
%where $\Z(f_1, \dots , f_r)$ denotes the set of common zeros of $f_1, \dots , f_r$ in $\AA^m(\Fq)$. 
It follows that $|\Z(f_1, f_2 , f_3)| \le  (\d-1) q^{m-1}  + \lfloor q^{m-3} \rfloor$. 
\end{proof}

We remark that a similar argument can be used to derive the Heijnen-Pellikaan bound for two linearly independent polynomials using Boguslavsky's theorem \cite[Theorem 2]{Bog} and to derive Ore's inequality from Serre's inequality (Proposition \ref{OreSerre}). 

\section{Projective Reed-Muller codes and their higher weights}
%
%We begin by reviewing some basic facts about linear codes and their higher weights. Here $n$ and $k$ denote positive integers and as before $q$ is a prime power. Given a $[n,k]_q$-linear code, that is, a $k$-dimensional subspace of $\Fq^n$, 
%
The notion of higher weights (also known as generalized Hamming weights) of a linear code is now fairly well-known and we refer to \cite{W} for %more on 
basic definitions and results. Note that \cite[Sec. 4]{DG} also gives a quick recap. 
The following basic  result of  Wei \cite{W} will be useful to us.

\begin{proposition}%[Wei]
\label{W}
Let $C$ be an $[n, k]_q$-code. 
Let $d_i$ %denote  
be the $i^{\rm th}$ higher weight of $C$ and  $d_j^{\perp}$ the $j^{\rm th}$ higher weight of 
 $C^{\perp}$, for $i=1, \dots, k$ and $j = 1,  \dots, n - k$. 
%For $i=1, \dots, k$, let  
%$d_i$ %denote 
%be the $i^{\rm th}$ higher weight of $C$ and for $j = 1,  \dots, n - k$, let $d_j^{\perp}$ %denote 
%be the $j^{\rm th}$ higher weight of 
% $C^{\perp}$. 
Then %we have the following:
\begin{itemize}
\item[(i)]
(Monotonicity) $1 \le d_1 < d_2 < \dots < d_k \le n$.
\item[(ii)]
(Duality) $\{d_j^{\perp} : j = 1, 2, \dots, n -k\} = \{1, \dots, n\} \setminus \{d_i : i = 1, 2, \dots, k \}$.
\end{itemize}
\end{proposition}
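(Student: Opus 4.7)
The plan is to prove both parts by introducing a support-dimension staircase function whose jumps encode the higher weights, and then linking the staircase functions of $C$ and $C^{\perp}$ via a subcode--projection duality. Throughout, write $\chi(D)$ for the support of a subcode $D \le C$, so that $d_r = \min\{|\chi(D)| : D \le C,\ \dim D = r\}$. For part (i), the bounds $d_1 \ge 1$ and $d_k \le n$ are immediate. For the strict inequality $d_{i-1} < d_i$, I would pick a subcode $D \le C$ with $\dim D = i$ and $|\chi(D)| = d_i$, and choose any $j \in \chi(D)$. The evaluation map $c \mapsto c_j$ is a surjection $D \to \Fq$, so its kernel $D'$ has dimension $i-1$ and satisfies $\chi(D') \subseteq \chi(D) \setminus \{j\}$. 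Hence $d_{i-1} \le |\chi(D')| \le d_i - 1$, as required.

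For part (ii), I would introduce the staircase function
$$
\phi(t) := \max\{\dim D : D \le C,\ |\chi(D)| \le t\}, \qquad t = 0, 1, \dots, n,
$$
together with its analogue $\phi^{\perp}$ for $C^{\perp}$. A short argument (removing one coordinate from the support of an optimal subcode costs at most one in dimension) shows that $\phi$ is nondecreasing with increments in $\{0, 1\}$, from $\phi(0) = 0$ up to $\phi(n) = k$, and since $d_r = \min\{t : \phi(t) \ge r\}$ its jump set is precisely $\{d_1, \dots, d_k\}$; similarly for $\phi^{\perp}$. The crux is the rank identity
$$
\dim C[J] = \dim C^{\perp}[J^c] + |J| + k - n, \qquad J \subseteq \{1, \dots, n\},
$$
where $C[J]$ denotes the subcode of $C$ supported in $J$. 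This follows from $\dim C[J] = k - \dim C|_{J^c}$ (the projection onto coordinates in $J^c$ has kernel $C[J]$) combined with $\dim C|_{J^c} + \dim C^{\perp}[J^c] = n - |J|$, the latter obtained from the isomorphism $(C|_{J^c})^{\perp} \cong C^{\perp}[J^c]$ via extension by zeros outside $J^c$. Maximizing over $J$ of size $t$ yields $\phi(t) = \phi^{\perp}(n - t) + t + k - n$.

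From this relation one reads off $\phi(t) - \phi(t-1) = 1 - (\phi^{\perp}(n - t + 1) - \phi^{\perp}(n - t))$, so $\phi$ jumps at $t$ if and only if $\phi^{\perp}$ does not jump at $n - t + 1$. Translating back, $t \in \{d_1, \dots, d_k\}$ if and only if $n + 1 - t \notin \{d_1^{\perp}, \dots, d_{n-k}^{\perp}\}$, which is the standard form of Wei's duality and gives (ii). The main obstacle is establishing the rank identity cleanly: correctly identifying $(C|_{J^c})^{\perp}$ with the subcode of $C^{\perp}$ supported on $J^c$ and keeping track of the dimensions. Once that is in place, everything else is elementary combinatorics on the two staircase functions.
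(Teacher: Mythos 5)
Your argument is correct in substance, and it supplies a proof where the paper gives none: Proposition~\ref{W} is simply quoted from Wei's paper, so there is no in-text proof to compare against. Your route is the standard one and all the individual steps check out: the kernel-of-a-coordinate-functional argument for strict monotonicity; the identification of $\phi(t)$ with $\max\{\dim C[J] : |J|=t\}$; the isomorphism $(C|_{J^c})^{\perp}\cong C^{\perp}[J^c]$ by extension by zeros, giving $\dim C[J]=\dim C^{\perp}[J^c]+|J|+k-n$; and the jump bookkeeping on the two staircase functions.

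The one point you must not gloss over is the very last sentence. What your computation yields is the biconditional $t\in\{d_1,\dots,d_k\}$ if and only if $n+1-t\notin\{d_1^{\perp},\dots,d_{n-k}^{\perp}\}$, i.e.
$$
\{d_j^{\perp} : 1\le j\le n-k\} \;=\; \{1,\dots,n\}\setminus\{\,n+1-d_i : 1\le i\le k\,\},
$$
which is Wei's duality in its standard form. This is \emph{not} the same as item (ii) as printed, which omits the reflection $s\mapsto n+1-s$; and (ii) as printed is in fact false. For instance, for the $[n,1]_q$ repetition code one has $d_1=n$, while the dual (the sum-zero code) has $d_j^{\perp}=j+1$, so $\{d_j^{\perp}\}=\{2,\dots,n\}$ whereas $\{1,\dots,n\}\setminus\{d_1\}=\{1,\dots,n-1\}$. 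The version you actually proved is the correct theorem and is the one the paper uses later: in the proof of Corollary~\ref{terminal} the relevant interval is $(n+1-d_1^{\perp},\,n]$, and that $n+1-{}$ is precisely the missing reflection. So either state and prove the reflected form explicitly, or record that (ii) needs the correction $\{d_j^{\perp}\}=\{1,\dots,n\}\setminus\{n+1-d_i\}$; as written, your claim that the derived biconditional ``gives (ii)'' conflates the two statements.
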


We shall now discuss two important examples of linear codes that are relevant to us. The first is quite classical and we touch upon it only briefly. As before, positive integers $d$ and $m$ are kept fixed throughout.  

\subsection{Reed-Muller Codes} 
Let $n=q^m$ and let $P_1, \dots, P_{n}$ be an ordered listing of elements of $\Aff^m(\Fq)$, i.e., $\Fq^m$. 
The \textit{Reed-Muller code} of order $d$ and length $q^m$ is %hte linear code defined , 
denoted by $\RM_q (d, m)$ and defined by %as follows: 
$$
\RM_q(d,m):=\{ \left( f(P_1), \dots , f(P_n)\right) : f\in \Fq[x_1, \dots , x_m], \; \deg f \le d\}.
% \text{ homogeneous of degree } d \text{ or the zero polynomial} \}
$$
These codes are also known as generalized Reed-Muller codes or affine Reed-Muller codes. A summary of many of their properties is given in \cite[Prop. 4]{BGH2}. In particular, we note that if $d<q$, then the dimension of $ \RM_q(d,m)$ is $\binom{m+d}{d}$ and, as in \eqref{erdr}, the higher weights are related to an affine analogue of $e_r (d, m) $: 
\begin{equation}
\label{drATBC}
d_r\left(\RM_q(d,m) \right) = q^m - e^{\Aff}_r (d, m) ,  \quad \text{ provided } d < q, 
\end{equation}
where $e^{\Aff}_r (d, m)$ %is, by definition, 
denotes the maximum number of zeros in $\Aff^m(\Fq)$ that a system of $r$ linearly independent polynomials in $\Fq[x_1, \dots , x_m]$ of degree $\le d$ can have. 
%an affine analogue of $e_r (d, m) $, given by 
%$$
%e^{\Aff}_r (d, m) := \max\{
%$$
Here is an easy % simple 
consequence of Ore's inequality and monotonicity of higher weights. 

\begin{lemma} 
\label{erA}
Let $k:= \binom{m+d}{d}$ and let $e^{\Aff}_r(d, m)$ be as defined above. Then
%With notations as above, 
$$
%e_1(d,m) = dq^{m-1} \quad \text{and}\quad 
e^{\Aff}_r(d, m) \le dq^{m-1} - r + 1 \quad \text{whenever} \quad 
1\le r \le k %\binom{m+d}{d} 
\text{ and } 1\le d < q. 
$$
\end{lemma}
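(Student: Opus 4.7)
The plan is to deduce the bound purely by combining Ore's inequality with Wei's monotonicity of higher weights, using the identification \eqref{drATBC} between $d_r\!\left(\RM_q(d,m)\right)$ and the affine quantity $e^{\mathrm{Aff}}_r(d,m)$.

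First I would specialize \eqref{drATBC} to $r = 1$: the first higher weight $d_1\!\left(\RM_q(d,m)\right)$ equals $q^m - e^{\mathrm{Aff}}_1(d,m)$, and this is exactly the minimum distance of the Reed--Muller code $\RM_q(d,m)$. Since Ore's inequality (part (i) of Proposition \ref{OreSerre}) gives $|\Z(f)| \le dq^{m-1}$ for any nonzero $f \in \Fq[x_1,\dots,x_m]$ of degree $d \le q$, taking the maximum over all single nonzero polynomials of degree at most $d$ yields $e^{\mathrm{Aff}}_1(d,m) \le d q^{m-1}$, and hence
\[
d_1\!\left(\RM_q(d,m)\right) \;\ge\; q^m - d q^{m-1}.
\]

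Next I would invoke the monotonicity statement in Proposition \ref{W}(i), which gives $d_1 < d_2 < \cdots < d_k$, where $k = \binom{m+d}{d}$ is the dimension of $\RM_q(d,m)$ (since we are in the range $d < q$). Because the $d_i$ are strictly increasing integers, $d_r \ge d_1 + (r-1)$ for every $1 \le r \le k$, so
\[
d_r\!\left(\RM_q(d,m)\right) \;\ge\; q^m - d q^{m-1} + r - 1.
\]

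Finally, applying \eqref{drATBC} again for arbitrary $r$ in the given range, $e^{\mathrm{Aff}}_r(d,m) = q^m - d_r\!\left(\RM_q(d,m)\right)$, and substituting the lower bound for $d_r$ produces
\[
e^{\mathrm{Aff}}_r(d,m) \;\le\; d q^{m-1} - r + 1,
\]
as desired. There is no genuine obstacle here; the only point that warrants care is checking the hypotheses: we need $d < q$ both to guarantee $\dim \RM_q(d,m) = \binom{m+d}{d}$ (so that Proposition \ref{W}(i) applies up to $r = k$) and for the identification \eqref{drATBC}, while Ore's inequality itself only requires $d \le q$.
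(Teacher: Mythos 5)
Your proof is correct and is essentially the argument the paper gives: both deduce $e^{\Aff}_1(d,m) \le dq^{m-1}$ from Ore's inequality and then use the identification \eqref{drATBC} together with the strict monotonicity of the higher weights from Proposition \ref{W}(i) to shift the bound down by $r-1$. The only cosmetic difference is that you phrase the monotonicity step in terms of the $d_r$ while the paper translates it directly into a strictly decreasing chain $e^{\Aff}_1(d,m) > \cdots > e^{\Aff}_k(d,m) \ge 0$.
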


\begin{proof}
By part (i) of Proposition \ref{OreSerre}, 
$e^{\Aff}_1(d,m) \le dq^{m-1}$, whereas by \eqref{drATBC} and part  (i) of Proposition \ref{W}, 
$e^{\Aff}_1(d, m) > e^{\Aff}_2(d, m) > \dots > e^{\Aff}_k(d, m) \ge 0$. %Consequently, 
Hence it follows that 
$e^{\Aff}_r(d, m) \le e^{\Aff}_1(d, m) -r + 1 \le   dq^{m-1} - r + 1$. 
\end{proof}

It may be remarked that the Heijnen-Pellikaan Theorem (HPT) is a much more general result than Lemma \ref{erA} above and gives a complete description of $e^{\Aff}_r(d, m)$. But of course HPT is more difficult to prove, while the proof above is almost trivial. 

\subsection{Projective Reed-Muller Codes}
Let $n = p_m$. We know that each point of $\PP^m(\Fq)$ admits a unique representative in $\Fq^{m+1}$ in which the first nonzero coordinate is $1$. Let 
$\sP_1, \dots, \sP_{n}$ be an ordered listing of such representatives in $\Fq^{m+1}$ of points of $\PP^m(\Fq)$.
%Denote by $\Fq[x_0, \dots , x_m]_d$ the space of homogeneous polynomials in $\Fq[x_0, \dots , x_m]$ 
%of degree $d$  together with the zero polynomial. 
The \textit{projective Reed-Muller code} of order $d$ and length $p_m$ is %hte linear code defined , 
denoted by $\PRM_q (d, m)$ and defined by %as follows: 
$$
\PRM_q(d,m):=\{ \left( F(\sP_1), \dots , F(\sP_n)\right) : F\in S_d\}.
% \text{ homogeneous of degree } d \text{ or the zero polynomial} \}
$$
%Evidently, this is a linear subspace of $\Fq^n$, and hence a $q$-ary (linear) code of length $n =p_m$.
The following two %theorems 
results due to S{\o}rensen \cite[Theorems 1 and 2]{So} describe some of the fundamental properties of projective Reed-Muller codes.

\begin{proposition} %[\cite[So} ]
\label{so1}
Let $1 \le d \le m(q-1)$.
 Then the projective Reed-Muller code $\PRM_q(d,m)$ is a nondegenerate $[n, k, d']_q$-code with
\begin{itemize}
\item[(i)]
$n = p_m$, 
\item[(ii)]
$k = \displaystyle{\sum_{\substack{t = 1 \\ t \equiv d \pmod {q-1}}}^{d}} \left (\displaystyle{\sum_{j=0}^{m+1}} (-1)^j {m+1 \choose j} {t - jq + m \choose t - jq}\right)$, 
\item[(iii)]
$d' = (q - s)q^{m-t-1},$ where $s$ and $t$ are unique integers satisfying 
$$d - 1 = t (q-1) + s \quad \mathrm{ and } \quad 0 \le s < q-1.$$
\end{itemize}
\end{proposition}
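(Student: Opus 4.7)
The plan is to address each of the three parts separately, following the structure of S\o{}rensen's original argument. Part (i) is immediate from the definition of $\PRM_q(d,m)$ and the choice of $\sP_1,\ldots,\sP_n$ as representatives of the distinct points of $\PP^m(\Fq)$.

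For part (ii), let $\phi_d : S_d \to \Fq^n$ be the evaluation map $F \mapsto (F(\sP_1),\ldots,F(\sP_n))$, so that $\PRM_q(d,m)$ is the image of $\phi_d$ and $k = \dim S_d - \dim \ker \phi_d$. The kernel consists of $F \in S_d$ such that $F(\sP_i) = 0$ for every chosen representative. Because each $\sP_i$ has its first nonzero coordinate equal to $1$ and $F$ is homogeneous of degree $d$, $F(\sP_i) = 0$ holds iff $F$ vanishes on the whole line through $\sP_i$, i.e., iff $F$ vanishes on all of $\PP^m(\Fq)$. Thus $\ker \phi_d$ is the degree-$d$ piece of the homogeneous vanishing ideal $I(\PP^m(\Fq))$, and it suffices to compute the Hilbert function of the graded ring $S / I(\PP^m(\Fq))$. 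Partitioning $\PP^m(\Fq)$ into affine cells $U_i = \{[0{:}\cdots{:}0{:}1{:}c_{i+1}{:}\cdots{:}c_m]\}$ and using the relation $x^{q-1} = 1$ on $\Fq^*$, one exhibits a monomial basis indexed by exponent vectors $(a_0,\ldots,a_m)$ whose total degree $t$ satisfies $t \equiv d \pmod{q-1}$ and $t \le d$, with $0 \le a_i \le q-1$ for each $i$ (after a suitable normalization). Inclusion-exclusion on the forbidden conditions $a_i \ge q$ produces the inner alternating sum $\sum_{j=0}^{m+1}(-1)^j\binom{m+1}{j}\binom{t-jq+m}{t-jq}$, and summing over admissible $t$ gives the formula in (ii).

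Part (iii) is the core technical content. Write $d' = p_m - N(d,m)$, where $N(d,m) = \max\{|\V(F)| : 0 \ne F \in S_d\}$. The plan has two halves. First, to realize the bound I would exhibit a polynomial of degree $d$ whose $\Fq$-zero locus in $\PP^m(\Fq)$ has exactly $p_m - (q-s)q^{m-t-1}$ points: a natural candidate is a product of $t$ factors, each of the form $\prod_{\lambda \in \Fq^*}(x_i - \lambda x_0)$ of degree $q-1$ (vanishing on a union of $q-1$ hyperplanes through a common codimension-$2$ flat), together with $s+1$ further linear factors chosen in the remaining coordinates so that the resulting vanishing locus decomposes cleanly as a union of ``slabs'' and hyperplanes, and a direct count matches the claim. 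Second, to prove $N(d,m) \le p_m - (q-s)q^{m-t-1}$ I would induct on $m$: the base case $m=1$ reduces to the elementary fact that a nonzero polynomial of degree $d$ has at most $\min(d,q+1)$ zeros on $\PP^1(\Fq)$, which one checks agrees with the formula. For the inductive step, pick a hyperplane $\Pi$ with $F|_{\Pi}\not\equiv 0$, bound $|\V(F)\cap \Pi|$ via the induction hypothesis on $\Pi \cong \PP^{m-1}(\Fq)$, and bound $|\V(F)\cap(\PP^m(\Fq)\setminus \Pi)|$ by dehomogenizing with respect to a defining form of $\Pi$ and applying Ore's inequality (Proposition \ref{OreSerre}(i)); then add. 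A subtle point is that Ore's inequality requires the degree to be $< q$, so when $d \ge q$ one must first reduce by peeling off factors of degree $q-1$ using the identity $x^{q-1} \equiv 1$ on $\Fq$-rational points, which leaves the $\Fq$-point zero locus unchanged and shifts $d$ down by multiples of $q-1$ until one reaches the residue $s$.

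The principal obstacle is the optimality argument in (iii) for $d$ large relative to $q$: the Serre-type hyperplane induction only handles $d \le q+1$ directly, and making the induction output match the closed form $(q-s)q^{m-t-1}$ requires careful bookkeeping of the pair $(t,s)$, a judicious choice of $\Pi$ at each stage, and a clean reduction from $d$ to $d-(q-1)$ that respects the stratification of $\PP^m(\Fq)$. This coordination between the $(q-1)$-periodic reduction and the affine induction is where most of the genuine work of S\o{}rensen's theorem is concentrated.
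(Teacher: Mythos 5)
First, a point of reference: the paper does not prove this proposition at all — it is quoted from S{\o}rensen \cite[Theorems 1 and 2]{So} — so there is no internal argument to compare yours against, and I can only assess your sketch on its own terms. Parts (i) and (ii) are fine in outline: the kernel of the evaluation map is the degree-$d$ graded piece of the vanishing ideal of the full point set, and the inclusion--exclusion count of monomials of degree $t$ with every exponent at most $q-1$, summed over $t\le d$ with $t\equiv d\pmod{q-1}$, is the standard route to the dimension formula. The extremal configuration you propose for (iii) (a product of $t$ full pencils $\prod_{\lambda\in\Fq^*}(x_i-\lambda x_0)$ together with $s+1$ further linear forms, one of them $x_0$ itself) does have exactly $p_m-(q-s)q^{m-t-1}$ rational zeros, so the attainability half of (iii) is in order.

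The genuine gap is in the upper bound of (iii). Your inductive step --- bound $|\V(F)\cap\Pi|$ by the induction hypothesis on $\Pi\cong\PP^{m-1}(\Fq)$, bound $|\V(F)\cap\Pi^c|$ by Ore's inequality, and add --- fails already for $2\le d\le q$: it yields $dq^{m-1}+dq^{m-2}+p_{m-3}$, which strictly exceeds the target $dq^{m-1}+p_{m-2}$ by $(d-1)q^{m-2}$. The two partial maxima are simply not simultaneously attainable, and nothing in your argument rules that out; this is exactly the difficulty that the known proofs circumvent by a global device (Serre's line-counting, or the dichotomy according to whether $\V(F)$ contains a hyperplane, in which case one factors out a linear form and inducts on $d$). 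Separately, your reduction for $d\ge q$ by ``peeling off factors of degree $q-1$ via $x^{q-1}\equiv 1$'' is not a legitimate operation on an arbitrary polynomial: the identity fails at $0$, and the honest reduction modulo $x_i^q-x_i$ only caps each individual exponent at $q-1$, leaving the total degree as large as $m(q-1)$, so Ore's inequality still does not apply. What is actually needed here is the minimum-weight theorem for affine Reed--Muller codes (a nonzero reduced polynomial of degree $t'(q-1)+s'$ has at least $(q-s')q^{m-t'-1}$ nonzeros in $\AA^m(\Fq)$), itself a nontrivial theorem, combined with the observation --- visible in the shift $d-1=t(q-1)+s$ of the statement --- that the projective minimum weight for degree $d$ matches the affine one for degree $d-1$; a direct dehomogenization applied to degree $d$ would land on the wrong pair $(t,s)$ and give a strictly weaker bound. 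As it stands, your sketch correctly locates where the work is but does not carry it out.
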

\begin{proposition} %[\cite[Theorem 2]{So}]
\label{so2}
Let $1 \le d \le m(q-1)$ be such that $q - 1 \nmid d$. Then the dual of the projective Reed-Muller code $\PRM_q(d, m)$ is also a projective Reed-Muller code $\PRM_q(d^{\perp}, m)$, where $d^{\perp} = m(q-1) - d$.  
\end{proposition}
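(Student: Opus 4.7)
The plan is first to establish the inclusion $\PRM_q(d^{\perp}, m) \subseteq \PRM_q(d, m)^{\perp}$ by a direct character sum computation, and then to upgrade it to equality via a dimension count. Both codes have length $n = p_m$, and the condition $q-1 \nmid d$ forces $q-1 \nmid d^{\perp}$, so Proposition \ref{so1} applies verbatim to $\PRM_q(d^{\perp}, m)$ as well.

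For the inclusion, fix $F \in S_d$ and $G \in S_{d^{\perp}}$; the task is to show $\sum_{i=1}^{n} F(\sP_i)\,G(\sP_i) = 0$. The key observation is that $\lambda^{m(q-1)} = 1$ for every $\lambda \in \Fq^{*}$, so by homogeneity of $FG$ of degree $m(q-1)$,
$$
\sum_{i=1}^{n} F(\sP_i)\,G(\sP_i)
= \frac{1}{q-1} \sum_{i=1}^{n} \sum_{\lambda \in \Fq^{*}} F(\lambda \sP_i)\,G(\lambda \sP_i).
$$
As $(i, \lambda)$ ranges over its domain, $\lambda \sP_i$ traverses each nonzero vector of $\Fq^{m+1}$ exactly once, by the convention that the first nonzero coordinate of each $\sP_i$ equals $1$. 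Adding back the term at $v = 0$, which vanishes since $FG$ is homogeneous of positive degree, the right-hand side equals $(q-1)^{-1} \sum_{v \in \Fq^{m+1}} (FG)(v)$. Expand $FG$ as a linear combination of monomials $x_0^{a_0} \cdots x_m^{a_m}$ with $a_0 + \cdots + a_m = m(q-1)$; each such monomial contributes $\prod_{j=0}^{m} \sum_{\alpha \in \Fq} \alpha^{a_j}$, which is nonzero only when every $a_j$ is a positive multiple of $q-1$. But that would force $\sum a_j \ge (m+1)(q-1) > m(q-1)$, a contradiction. Hence each monomial contributes $0$, and the orthogonality is established.

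To upgrade the inclusion to equality, it suffices to verify the dimension identity
$$
\dim \PRM_q(d, m) + \dim \PRM_q(d^{\perp}, m) = p_m,
$$
in view of the general fact that $\dim C + \dim C^{\perp} = n$ for a linear $[n, k]_q$-code $C$. The hard part will be this combinatorial identity hidden inside the explicit dimension formula of Proposition \ref{so1}(ii). The natural approach is to reinterpret each dimension via the vanishing ideal $I(\PP^m(\Fq))$ of $\PP^m(\Fq)$ in $\Fq[x_0, \dots, x_m]$: a basis of the degree-$d$ piece of the quotient $S / I(\PP^m(\Fq))$ should pair up naturally with a basis of the degree-$d^{\perp}$ piece through complementary residue classes of exponents modulo $q-1$, and the pairing ought to be tallied so that the total equals $p_m = 1 + q + \cdots + q^m$. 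The hypothesis $q-1 \nmid d$ enters precisely to rule out the degree classes for which this natural pairing would otherwise be obstructed.
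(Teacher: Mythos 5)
First, a point of reference: the paper does not prove this proposition at all --- it is quoted from S{\o}rensen \cite{So}, so there is no internal proof to compare against. Your overall strategy (orthogonality via a power-sum computation, then a dimension count) is the standard route. The orthogonality half of your argument is complete and correct: each nonzero vector of $\Fq^{m+1}$ is $\lambda\sP_i$ for a unique pair $(i,\lambda)$, $FG$ is homogeneous of degree $m(q-1)$ so the average over $\lambda\in\Fq^{*}$ costs only a factor $q-1=-1\ne 0$, and the monomial-by-monomial vanishing of $\sum_{v\in\Fq^{m+1}}(FG)(v)$ via the fact that $\sum_{\alpha\in\Fq}\alpha^{a}=0$ unless $a$ is a positive multiple of $q-1$ is airtight. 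Note that this half never uses $q-1\nmid d$; that hypothesis is needed only to ensure $d^{\perp}\ge 1$ and, crucially, in the second half.

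The second half is where there is a genuine gap. You reduce the proposition to the identity $\dim\PRM_q(d,m)+\dim\PRM_q(d^{\perp},m)=p_m$ and then only describe how a proof ``should'' go (``should pair up naturally'', ``ought to be tallied''); nothing is verified, and this is precisely where the remaining content lies. Concretely, what is missing is the following. By Proposition \ref{so1}(ii), $\dim\PRM_q(d,m)$ equals the number of exponent tuples $(a_0,\dots,a_m)\in\{0,\dots,q-1\}^{m+1}$ with $1\le a_0+\cdots+a_m\le d$ and $a_0+\cdots+a_m\equiv d\pmod{q-1}$. The involution $a_i\mapsto q-1-a_i$ carries this set bijectively onto the set of tuples whose sum is $\equiv d^{\perp}\pmod{q-1}$ and exceeds $d^{\perp}$; since $q-1\nmid d^{\perp}$, these tuples are exactly the complement, within the tuples of positive sum lying in the residue class of $d^{\perp}$, of those counted by $\dim\PRM_q(d^{\perp},m)$. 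So the two dimensions add up to the number of tuples in $\{0,\dots,q-1\}^{m+1}$ whose coordinate sum lies in the fixed nonzero residue class of $d^{\perp}$ modulo $q-1$, and one must still prove that this count equals $p_m=(q^{m+1}-1)/(q-1)$ --- for instance by a roots-of-unity filter applied to $(1+z+\cdots+z^{q-1})^{m+1}$, which takes the value $q^{m+1}$ at $z=1$ and the value $1$ at every nontrivial $(q-1)$-th root of unity. None of these steps is difficult, but all of them are absent, so as written your argument establishes only the inclusion $\PRM_q(d^{\perp},m)\subseteq\PRM_q(d,m)^{\perp}$, not the asserted equality.
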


\begin{corollary}
\label{done}
If $1 \le d<q-1$, then the minimum distance of $\PRM_q (d, m)^{\perp}$ is $d+2$.
\end{corollary}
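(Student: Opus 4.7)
The plan is to deduce the corollary directly from Propositions \ref{so1} and \ref{so2}, which were just stated. First, I would verify the hypothesis of Proposition \ref{so2}. Since $1 \le d < q-1$, the integer $d$ is not a multiple of $q-1$, so $q-1 \nmid d$. Consequently, Proposition \ref{so2} gives
\[
\PRM_q(d,m)^{\perp} = \PRM_q(d^{\perp}, m), \quad \text{where } d^{\perp} = m(q-1) - d.
\]
Next I would observe that $d^{\perp}$ satisfies the hypothesis of Proposition \ref{so1}: the bound $d \ge 1$ forces $d^{\perp} \le m(q-1) - 1 < m(q-1)$, while $d < q-1 \le m(q-1)$ ensures $d^{\perp} \ge 1$. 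Thus the minimum distance of $\PRM_q(d,m)^{\perp}$ equals $(q-s)q^{m-t-1}$, where $s,t$ are the unique integers with $d^{\perp} - 1 = t(q-1) + s$ and $0 \le s < q-1$.

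The only remaining step is a short arithmetic computation to identify $t$ and $s$. Write
\[
d^{\perp} - 1 = m(q-1) - d - 1 = (m-1)(q-1) + (q-2-d).
\]
Since $1 \le d \le q-2$, the remainder $q-2-d$ lies in $\{0, 1, \dots, q-3\}$, which is contained in $\{0, 1, \dots, q-2\}$. Hence $t = m-1$ and $s = q-2-d$. Substituting yields
\[
(q-s)q^{m-t-1} = \bigl(q - (q-2-d)\bigr) q^{m-(m-1)-1} = (d+2)\cdot q^0 = d+2,
\]
which is the desired value.

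There is no real obstacle here; the whole argument is just a careful bookkeeping exercise combining the two theorems of S\o rensen. The only point requiring a moment's attention is checking that the hypothesis $q-1 \nmid d$ of Proposition \ref{so2} is met under the strict inequality $d < q-1$ (as opposed to $d \le q-1$), and that the computed remainder $q-2-d$ genuinely lies in the admissible range $[0, q-2]$, both of which follow immediately from $1 \le d < q-1$.
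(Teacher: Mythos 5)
Your proposal is correct and follows exactly the paper's own argument: apply S{\o}rensen's duality result to identify $\PRM_q(d,m)^{\perp}$ with $\PRM_q(m(q-1)-d,\,m)$, then compute the minimum distance from the division $d^{\perp}-1=(m-1)(q-1)+(q-2-d)$. The only difference is that you spell out the verification of the hypotheses ($q-1\nmid d$, the range of $d^{\perp}$, and $0\le q-2-d<q-1$), which the paper leaves implicit.
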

\begin{proof}
By Proposition \ref{so2},  we see that $\PRM_q (d, m)^{\perp} = \PRM_q (d^{\perp}, m)$, where $d^{\perp} = m(q-1) - d$. Now $$d^{\perp} - 1 = m(q-1) - d - 1 = 
(m-1)(q-1) + (q-d-2) 
$$
Thus the minimum distance of $\PRM_q (d, m)^{\perp}$ is $(q - q + d + 2)q^{m - m + 1 - 1} = d + 2$. 
\end{proof}

 The Tsfasman-Boguslavsky conjecture is very closely related to the generalized Hamming weights or higher weights of projective Reed-Muller codes. Indeed one observes that 
 if $d \le q$, then for  
$1\le r \le \binom{m+d}{d}$,  
\begin{equation}
\label{drTBC}
d_r\left(\PRM_q(d,m) \right) = p_m - e_r (d, m) , 
\end{equation}
where $e_r(d,m)$ is as defined in the Introduction. For more on this relation, one may refer to \cite{Bog, DG, TV2}. We now derive a useful consequence of Wei duality and monotonicity of higher weights of a linear code.  %(Theorem \ref{W}). 
%recall a general result due to Wei \cite{W} about the higher weights of a linear code. 

%Thus, Theorem  \ref{mainthm} partially solves the problem of computing generalized Hamming weights of the projective Reed-Muller codes in the following sense:
%\begin{corollary}
%\label{prmhwt}
%Let $r, m, d$ be fixed positive integer. If $d < q-1$, then 
%\begin{equation*}
%\begin{split}
%&d_r\left(\PRM_q(d,m) \right) \\
%= &\begin{cases}p_m - p_{m-r} & \text{ if } \, d = 1 \text{ and } \, 1\le r \le m+1, \\
%p_m - (d-1)q^{m-1} - p_{m-2} - \lfloor q^{m-r}\rfloor & \text{ if } \, d > 1 \text{ and } \, 1\le r \le m+1. \end{cases}
%\end{split}
%\end{equation*}
%\end{corollary}
%
%
%\begin{proposition}[Wei]
%\label{W}
%Let $C$ be an $[n, k]_q$-code. For $i=1, \dots, k$, let  $d_i$ denote the $i^{\rm th}$ higher weight of $C$ and for $j = 1,  \dots, n - k$, let $d_j^{\perp}$ denote the $j^{\rm th}$ higher weight of 
% $C^{\perp}$. Then %we have the following:
%\begin{itemize}
%\item[(i)]
%(Monotonicity) $1 \le d_1 < d_2 < \dots < d_k \le n$.
%\item[(ii)]
%(Duality) $\{d_j^{\perp} : j = 1, 2, \dots, n -k\} = \{1, \dots, n\} \setminus \{d_i : i = 1, 2, \dots, k \}$.
%\end{itemize}
%\end{proposition}

\begin{corollary}
\label{terminal}
Suppose $1 \le d < q-1$ and  let $k:=\binom{m+d}{d}$. Then,
$$
d_{k - s} (\PRM_q (d, m)) = n - s \quad \text{ for } s = 0, 1, \dots, d.
$$
\end{corollary}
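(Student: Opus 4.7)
The plan is to combine Corollary \ref{done}, which identifies the minimum distance of $\PRM_q(d,m)^{\perp}$ as $d+2$, with the monotonicity and Wei duality statements for higher weights (Proposition \ref{W}). The guiding picture is that the $d+1$ integers $1, 2, \dots, d+1$ lie strictly below the minimum distance of the dual code and hence cannot appear anywhere in its weight hierarchy; Wei duality then forces the complementary block $\{n-d, n-d+1, \dots, n\}$ to appear in the weight hierarchy of $\PRM_q(d,m)$ itself, and monotonicity finally identifies these as the terminal $d+1$ weights.

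Concretely, write $C = \PRM_q(d,m)$, and let $d_j^{\perp}$ denote the $j^{\rm th}$ higher weight of $C^{\perp}$. Corollary \ref{done} yields $d_1^{\perp} = d+2$, and Proposition \ref{W}(i) applied to $C^{\perp}$ gives $d_1^{\perp} < d_2^{\perp} < \cdots < d_{n-k}^{\perp}$; in particular $d_j^{\perp} \ge d+2$ for every $j$, so none of $1, 2, \dots, d+1$ occurs among the $d_j^{\perp}$. Invoking Wei duality (Proposition \ref{W}(ii)) in its standard form, which asserts that $\{d_i\}$ and $\{n+1-d_j^{\perp}\}$ partition $\{1, 2, \dots, n\}$, the $d+1$ integers $n, n-1, \dots, n-d$, being the image of $\{1, 2, \dots, d+1\}$ under the reflection $j \mapsto n+1-j$, must all belong to $\{d_i\}$.

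To finish, since $d_i \le n$ and $d_1 < d_2 < \cdots < d_k$ by Proposition \ref{W}(i), the $d+1$ largest elements of $\{d_i\}$ are necessarily $d_{k-d}, d_{k-d+1}, \dots, d_k$. Comparing with the preceding paragraph, these must coincide, respectively, with $n-d, n-d+1, \dots, n$, giving $d_{k-s}(\PRM_q(d,m)) = n-s$ for $s = 0, 1, \dots, d$, as claimed.

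I do not anticipate any substantive obstacle --- the argument is a purely formal manipulation with the weight hierarchy. The only point requiring care is the correct orientation of Wei duality: one must track the reflection $j \mapsto n+1-j$, under which the ``floor'' $\{1, 2, \dots, d+1\}$ missed by the dual hierarchy is transported to the ``ceiling'' $\{n-d, n-d+1, \dots, n\}$ that is forced to appear in the primal hierarchy.
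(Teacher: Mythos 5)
Your proof is correct and follows essentially the same route as the paper: Corollary \ref{done} gives $d_1^{\perp}=d+2$, Wei duality (read in its standard form with the reflection $j\mapsto n+1-j$) forces the integers $n-d,\dots,n$ into the weight hierarchy of $\PRM_q(d,m)$, and strict monotonicity identifies them as the terminal $d+1$ weights. The only cosmetic differences are that you deduce $d_k=n$ from the duality argument itself rather than invoking nondegeneracy separately, and that you explicitly flag the reflection that must accompany Proposition \ref{W}(ii) --- a point the paper's proof also uses implicitly when it speaks of the interval $(n+1-d_1^{\perp},\,n]$.
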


\begin{proof}
%Thanks to Theorem \ref{W} the last higher weights of $\PRM_q (d, m)$ will be consecutive integers in the interval $(n + 1 - d_1 %(\PRM_q (d^{\perp}, m), n]$. Thanks to corollary \ref{done} we have $n + 1 - d_1 (\PRM_q (d^{perp}, m) = n + 1 - d - 2 = n - d -1$. %Thus the last higher weights of $\PRM_q (d, m)$ are  $n-d, n-d+1, \dots, n$.
Let $d_1^{\perp}$ denote the minimum distance of $\PRM_q (d, m)^{\perp}$. 
The duality part of Theorem \ref{W} implies that all the integers in the interval $(n + 1 - d_1^{\perp}, \, n]$ will be attained by some higher weights of $\PRM_q(d, m)$. 
Hence by Corollary \ref{done},
% shows that the interval is $(n - d - 1, n]$. Thus
 the integers $n-d, \, n-d+1, \dots, n$ are among the higher weights of $\PRM_q (d, m)$. 
%Also, $d_k (\PRM_q (d, m)) = n$ since $\PRM_q(d, m)$ is nondegenerate. 
By the nondegeneracy of $\PRM_q(d, m)$, we have $d_k (\PRM_q (d, m)) = n$. 
Now the monotonicity of higher weights (Theorem \ref{W}) implies  %that 
$d_{k-s} (\PRM_q(d, m)) = n - s$ for $s = 0, 1, \dots, d$.
\end{proof}

%\begin{corollary}
\begin{theorem}
\label{terminale}
Suppose $1 \le d < q-1$ and  let $k:=\binom{m+d}{d}$. Then 
$$
e_{k-s} (d, m) = s = T_{k-s}(d,m) \quad  \text{for } %\quad 
s = 0, 1, \dots, d.
$$
Consequently, the 
Tsfasman-Boguslavsky Conjecture holds in the affirmative for the last $d+1$ values of $r$, i.e., when 
%$r = \binom{m+d}{d}
$r=k - s$ 
 for $s= 0, 1, \dots , d$. 

\end{theorem}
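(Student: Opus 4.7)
The plan is to combine Corollary \ref{terminal} with the identity \eqref{drTBC} to extract the values of $e_{k-s}(d,m)$, and then to verify the combinatorial identity $T_{k-s}(d,m)=s$ by direct substitution into \eqref{TBr} for the particular tuples $(\nu_1,\dots,\nu_{m+1})$ that sit at the bottom of the descending lex order.

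First, since $d<q-1\le q$, the relation \eqref{drTBC} applies, giving $d_{k-s}(\PRM_q(d,m))=n-e_{k-s}(d,m)$ for each $s$. Combined with Corollary \ref{terminal}, which supplies $d_{k-s}(\PRM_q(d,m))=n-s$ for $s=0,1,\dots,d$, this forces $e_{k-s}(d,m)=s$ at once.

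Next I would identify, for each $s\in\{0,1,\dots,d\}$, the $(k-s)$-th tuple in descending lexicographic order among $(m+1)$-tuples of nonnegative integers with sum $d$. Since descending lex order places tuples with larger leading entries first, the final $d+1$ positions are occupied by the $d+1$ tuples whose first $m-1$ coordinates vanish, namely those of the form $(0,\dots,0,a,d-a)$ with $a=0,1,\dots,d$. A direct pairwise comparison within this subfamily shows that the tuple at position $k-s$ is $(0,\dots,0,s,d-s)$, so the index $j:=\min\{i:\nu_i\ne 0\}$ equals $m$ when $s\ge 1$ and $m+1$ when $s=0$.

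Finally I would substitute into \eqref{TBr}. When $s=0$, the choice $j=m+1>m$ makes $p_{m-2j}=0$ and the summation range empty, giving $T_k(d,m)=0$. When $1\le s\le d$, we have $j=m$, so $p_{m-2j}=p_{-m}=0$ and the sum collapses to the single term $\nu_m(p_0-p_{-m})=s\cdot 1=s$; hence $T_{k-s}(d,m)=s$. Combining the two equalities proves the theorem and its consequence for the TBC. I do not anticipate any serious obstacle: the substance of the argument is concentrated in Corollary \ref{terminal}, which already encapsulates Wei duality together with the minimum-distance computation for $\PRM_q(d,m)^\perp$ carried out in Corollary \ref{done}. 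The remaining steps are routine bookkeeping in the lexicographic ordering and a mechanical evaluation of \eqref{TBr} on a very short list of tuples.
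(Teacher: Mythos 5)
Your proposal is correct and follows essentially the same route as the paper: $e_{k-s}(d,m)=s$ is read off from Corollary \ref{terminal} via \eqref{drTBC}, and $T_{k-s}(d,m)=s$ is obtained by identifying the last $d+1$ tuples in descending lexicographic order as $(0,\dots,0,s,d-s)$ and evaluating \eqref{TBr}. Your explicit case split on $j$ (namely $j=m+1$ when $s=0$ and $j=m$ when $s\ge 1$) is in fact slightly more careful than the paper's uniform computation, but leads to the same conclusion.
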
 

%\end{corollary}
\begin{proof}
%Follows directly 
From \eqref{drTBC} and Corollary \ref{terminal}, % it follows that 
we obtain $e_{k-s} (d, m) = s$ for $s = 0, 1, \dots, d$. 
Further, it is clear that  
%the exponent vectors of monomials in $m+1$ variables of degree $d$, i.e., among 
if the $(m+1)$-tuples $(\alpha_1, \dots , \alpha_{m+1})$ of nonnegative integers satisfying 
$\alpha_1+ \cdots + \alpha_{m+1} = d$ are ordered  in descending lexicographic order, then 
the last $d+1$ such tuples are $(0,0, \dots , 0, d, 0), (0,0, \dots , 0, d-1, 1), \dots, (0,0, \dots , 0, 0, d)$. 
%$(0,0, \dots , 0, d-s, s)$ for $s=0,1, \dots , d$.   Hence 
In particular, for $s = 0, 1, \dots, d$, the $(k-s)^{\rm th}$ tuple is $(0,0, \dots , 0, s, d-s)$ and so the 
Tsfasman-Boguslavsky bound in \eqref{TBr} %works out to be 
is 
$T_{k-s}(d,m) = p_{m-2m} + s (p_0 - p_{-m}) = s$.
%  for $s = 0, 1, \dots, d$. 
%as $r$ varies from $0$ to $d$. Hence 
\end{proof}

\begin{example}
\label{z5}
Let us consider a small case where $m=2$ and $d = 2$. Here $k = 6$. 
%Let $z_r$ denote the quantity $e_r (2, 2)$ for $r = 1, 2, \dots, 6$. As usual we 
Assume that $q \ge 4$. Then from \eqref{erupto3}, we see that 
$$
e_1(2,2) = 2q + 1, \quad e_2(2,2) = q + 2, \quad \text{ and } \quad e_3(2,2) = q + 1,
$$
whereas by Corollary \ref{terminale}, we see that 
$$
e_4(2,2) = 2, \quad e_5(2,2) = 1, \quad \text{ and } \quad e_6(2,2) = 0.
$$
Using \eqref{drTBC}, we can also compute $d_r(\PRM_q(2,2))$ for $r=1, \dots , 6$. 
\end{example}

%\begin{corollary}
%Let $r, k, \epsilon, \delta_m$ be as defined in Theorem \ref{Zanella}. Then 
%$$
%d_r\left(\PRM_q(2,m) \right) = p_m - p_k - \lfloor q^{\epsilon - 1} \rfloor.$$
%\end{corollary}

To end this section, we note that a special case of Corollary \ref{terminal} or Theorem \ref{terminale} gives rise to seemingly nontrivial geometric results such as the following. 
%This result is quite in contrast with the fact that the Grassmann variety $G_{\ell,m}$ not only contains plenty of lines but projective linear suspaces of dimension $\max\{\ell, m-\ell\}$; see, for example, \cite[\S 24.2]{HT} or \cite[\S 2]{GPP}.

\begin{corollary}
\label{line}
Assume that $1 < d < q-1$. Let $k:=\binom{m+d}{d}$ and let $\Vmd$ %denote 
be the Veronese variety given by the image of $\PP^m(\Fq)$ in $\PP^{k-1}(\Fq)$ under the Veronese map of degree $d$. Then $\Vmd$ does not contain a (projective) line in $\PP^{k-1}(\Fq)$. 
\end{corollary}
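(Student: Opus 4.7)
The plan is to translate the geometric statement into an algebraic one about common zero sets of homogeneous polynomials in $S_d$, and then invoke Theorem \ref{terminale} with $s=2$. The key observation is that the degree-$d$ Veronese map $\nu_d: \PP^m(\Fq) \to \PP^{k-1}(\Fq)$ is injective, and because the coordinates of $\PP^{k-1}$ are indexed by the $k$ monomials of degree $d$ in $x_0, \dots, x_m$, the pullback of a linear form on $\PP^{k-1}$ under $\nu_d$ is precisely a homogeneous polynomial of degree $d$ in $S$. Consequently, hyperplanes of $\PP^{k-1}(\Fq)$ correspond bijectively, up to scalars, to nonzero elements of $S_d$, and this correspondence preserves linear independence.

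The argument then proceeds by contradiction. Suppose $\Vmd$ contains a projective line $L \subseteq \PP^{k-1}(\Fq)$. Since $L$ has codimension $k-2$ in $\PP^{k-1}$, it can be expressed as the intersection of $k-2$ linearly independent hyperplanes of $\PP^{k-1}(\Fq)$. Pulling these back via $\nu_d$ yields $k-2$ linearly independent polynomials $F_1, \dots, F_{k-2} \in S_d$, whose common zero set in $\PP^m(\Fq)$ is $\nu_d^{-1}(L \cap \Vmd(\Fq)) = \nu_d^{-1}(L(\Fq))$. By injectivity of $\nu_d$ on $\PP^m(\Fq)$ and the fact that $|L(\Fq)| = q+1$, we get $|\V(F_1, \dots, F_{k-2})| = q+1$. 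However, Theorem \ref{terminale} applied with $s = 2$ (which is permitted since $d \ge 2$) gives $e_{k-2}(d,m) = 2$. Since the hypothesis $1 < d < q-1$ forces $q \ge 4$, we have $q+1 \ge 5 > 2$, the required contradiction.

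Most of the mathematical content is already absorbed into Theorem \ref{terminale}; the remaining work lies entirely in setting up the Veronese dictionary. The step I expect to require the most care is verifying that the $k-2$ hyperplanes cutting out $L$ can be chosen to be \emph{linearly independent}, so that the pullback polynomials $F_1, \dots, F_{k-2}$ are linearly independent in $S_d$ and the hypothesis of Theorem \ref{terminale} is genuinely applicable. This is a standard fact about linear subspaces of projective space, but it deserves explicit mention because the conclusion $e_{k-2}(d,m) = 2$ is a statement about linearly independent systems only.
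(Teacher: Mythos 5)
Your proposal is correct and is essentially the paper's own argument: the paper likewise reduces the claim to the terminal higher-weight computation ($d_{k-2}(\PRM_q(d,m)) = n-2$, equivalently $e_{k-2}(d,m)=2$) and derives a contradiction from the fact that a line would force a codimension-$(k-2)$ linear section of $\Vmd$ of size $q+1>2$. The only cosmetic difference is that you unwind the code/projective-system dictionary explicitly by pulling back $k-2$ linearly independent hyperplanes to linearly independent elements of $S_d$, whereas the paper cites the general correspondence between $[n,k]_q$-codes and projective systems from Tsfasman--Vl\u{a}du\c{t}; your attention to the linear independence of the cutting hyperplanes is exactly the point that correspondence encapsulates.
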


\begin{proof}
Let $n:=p_m$. Since $d>1$, we see from Corollary \ref{terminal} that 
\begin{equation}
\label{eq1}
d_{k - 2} (\PRM_q (d, m)) = n - 2.
\end{equation}
On the other hand, using the one-to-one correspondence between $[n,k]_q$-codes and $[n,k]_q$-projective systems (see, for example, equation (4) and Theorem 2.1 of \cite{TV2}), we know that 
%for $1\le r \le k$, 
$d_{r} (\PRM_q (d, m))$ is equal to 
\begin{equation}
\label{eq2}
 n - \max\left\{|\Vmd\cap \Pi| : \Pi \text{ projective subspace of codimension $r$ in } \PP^{k-1}(\Fq)\right\}.
\end{equation}
Indeed $\Vmd$ is the projective system corresponding to $\PRM_q(d,m)$ when $d\le q$ and this projective system is evidently nondegenerate. In particular, when $r=k-2$, a projective subspace of codimension $r$  in $\PP^{k-1}(\Fq)$ correspond to a (projective) line in $\PP^{k-1}(\Fq)$ and if $\Vmd$ were to contain a line, then \eqref{eq2} would be equal to $n-(1+q)$, which is strictly smaller than $ n-2$, in contradiction to \eqref{eq1}. 
\end{proof}

It appears difficult to locate a result in Corollary \ref{line} %such as above 
in standard treatises on finite geometry such as \cite{HT}. It can, however, be deduced from a relatively recent result of Kantor and Shult \cite[Thm. 1.1]{KS}. 
That the Veronese variety does not usually contain a line should be contrasted with  the fact that the Grassmann variety $G_{\ell,m}$ not only contains plenty of lines, but also projective linear suspaces of dimension $\max\{\ell, m-\ell\}$; see, for example, \cite[\S 24.2]{HT} or \cite[\S 2]{GPP}.

\section{A Counterexample to the TBC} %Theorem \ref{mainthm1} for a small case}

The aim of this section is to show that the TBC is false when  $m=3$, $d=2$ and $r=5$, i.e., for $5$ quadrics in $\PP^3$. Note that for these values of $m,d,r$, the Tsfasman-Boguslavsky bound \eqref{TBr} works out to be 
$$
T_5(3,2) = 2(q+1).
$$
The following result shows 
that this isn't the true maximum. i.e., $T_5(3,2) \ne e_5(3,2)$, % follows from.  
%The assumption on 
%provided $q> 2$, which is % satisfies  a condition that is a little % that $q>2$, which is  
% weaker than
Note that the condition $d<q-1$ in the TBC translates to $q>3$ when $d=2$. 

\begin{theorem}
\label{Z5}
Assume that $q>3$. 
Let $F_1, \dots, F_5 $ be linearly independent homogeneous polynomials of degree $2$ in $\Fq [x_0, x_1, x_2, x_3]$. %quadrics. 
Then 
$$|\V(F_1, \dots, F_5)| \le 2q + 1.$$ 
\end{theorem}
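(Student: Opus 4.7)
The plan is to reformulate the bound via coding theory and then combine a Wei-duality consequence of Corollary~\ref{done} with a direct geometric analysis of the base locus. Setting $n = p_3$ and $C = \PRM_q(2,3)$, identity~\eqref{drTBC} makes the inequality $|\V(F_1,\ldots,F_5)| \le 2q+1$ equivalent to the lower bound $d_5(C) \ge n - 2q - 1$ on the 5th generalised Hamming weight. From Corollary~\ref{done} we have $d_1(C^\perp) = 4$; Proposition~\ref{W} (applied exactly as in the proof of Corollary~\ref{terminal}) then pins down $d_{10}=n$, $d_9=n-1$, $d_8=n-2$ and shows that $n-3$ is not a weight of $C$. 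These facts settle the top of the weight hierarchy but do not by themselves force $d_5 \ge n-2q-1$.

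To finish, I would argue by contradiction. Suppose $X := \V(F_1,\ldots,F_5)$ has $|X| \ge 2q+2$, and let $W \subseteq S_2$ be the subspace of quadrics vanishing on $X$, so that $\dim W \ge 5$. I would first rule out a $2$-dimensional component of $\overline X \subseteq \PP^3(\overline{\FF}_q)$: any such component would force $F_1, \ldots, F_5$ to share a common linear factor $\ell$, placing them in the $4$-dimensional subspace $\ell \cdot S_1$ and contradicting linear independence. Hence $\dim \overline X \le 1$. Irreducible $1$-dimensional components of $\overline X$ are contained in a complete intersection of two of the $F_i$, so they have degree at most $4$. Enumerating the possibilities (line, plane conic, twisted cubic, elliptic quartic, plane quartic) and computing the dimensions of the corresponding spaces of vanishing quadrics (respectively $7$, $5$, $3$, $2$, $4$), only lines and plane conics carry at least $5$ linearly independent vanishing quadrics.

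The decisive subcase is when $\overline X$ is a union of two distinct lines: two concurrent lines give $|X| = 2q+1$ with a $5$-dimensional space of vanishing quadrics, realising the sharp bound, whereas two skew lines -- the only configuration reaching $|X| = 2q+2$ -- lie in only a $4$-dimensional space of quadrics (checked by a direct coordinate computation in the spirit of Lemma~\ref{hc1}), contradicting $\dim W \ge 5$. The remaining configurations (a single line or plane conic with isolated extra $\Fq$-points, or $0$-dimensional $\overline X$) are handled by combining B\'ezout's theorem ($|\overline X| \le 16$ when $\overline X$ is $0$-dimensional), Serre's inequality (Proposition~\ref{OreSerre}), and the observation that adding any isolated $\Fq$-point outside a positive-dimensional base locus strictly drops $\dim W$. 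The most delicate part of this plan is the $0$-dimensional case for the small values $q \in \{4,5,6,7\}$, where the B\'ezout bound $16$ is weaker than $2q+1$, and the hybrid configurations where a positive-dimensional component meets its isolated complement at a Galois-conjugate pair of points over $\FF_{q^2}$.
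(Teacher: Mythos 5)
Your plan is a genuinely different route from the paper's: the paper slices $X=\V(F_1,\dots,F_5)$ by planes $\Pi\in\hp3$, distinguishes whether the restrictions $F_i|_\Pi$ stay linearly independent on every plane, and then combines the incidence bound of Lemma~\ref{Zanella} with the already-computed values $e_r(2,2)$ of Example~\ref{z5} and the affine bound of Lemma~\ref{erA}; no classification of the base locus over $\overline{\FF}_q$ is ever needed. Your alternative --- classify the possible $\overline X$ supporting a $5$-dimensional system of vanishing quadrics --- is attractive and your individual dimension counts (line $7$, plane conic $5$, concurrent lines $5$, skew lines $4$) are correct, but as written the argument does not close, and you say so yourself. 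Two of the deferred cases are genuine gaps, not just bookkeeping.

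First, the $0$-dimensional case: the bound $|\overline X|\le 16$ is both wrong in spirit (three quadrics in $\PP^3$ meeting properly give $8$ points, not $16$) and, at $16$, insufficient for $q\in\{4,5,6,7\}$. This one is repairable: choose $F_i,F_j$ in the span with no common factor, so $\overline X$ lies on the degree-$4$ curve $\V(F_i)\cap\V(F_j)$; since no component of that curve lies in $\overline X$, some member of the system meets each component properly, giving $|\overline X|\le 2\cdot 4=8\le 2q+1$ for all $q\ge 4$. Second, and more seriously, the hybrid case of a single line $L$ plus a finite residual set $T$: your principle that each isolated point strictly drops the dimension only yields $|T|\le 2$ if the points impose \emph{independent} conditions on the $7$-dimensional space of quadrics through $L$, which they need not (three points on a line $L'$ meeting $L$ impose only two conditions, and then $L'$ is forced into the base locus). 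The crude residual-curve estimate gives only $|T|\le 6$, hence $|X|\le q+7$, which exceeds $2q+1$ for $q=4,5$. So this subcase requires an actual argument (e.g., showing that if $|T|\ge 3$ then $T$ forces an extra line or conic into $\overline X$, returning you to a configuration already bounded), and it is precisely the kind of work the paper's Case 4 ($t=4$, with the secondary splitting along $x_1$) is doing in disguise. Minor further points: your list of irreducible curves of degree $\le 4$ omits plane cubics and rational space quartics (harmless, since neither reaches dimension $5$), and the components of $\overline X$ need not be individually $\Fq$-rational, so the point counts $2q+1$ versus $2q+2$ must also be checked for Galois-conjugate pairs of lines or conics. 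Until the line-plus-points case and the small-$q$ arithmetic are nailed down, the proposal is a promising sketch rather than a proof.
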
 

\begin{proof}
Let $X = \V(F_1, \dots, F_5) \subseteq \PP^3$.
% and let $V$ denote the vector subspace of $\Fq [x_0, x_1, x_2, x_3]$ generated by $F_1, \dots, F_5$. 
%In case 
If the restrictions  $F_1|_{\Pi}, \dots, F_r|_{\Pi}$ are linearly independent for every %hyper
plane $\Pi \in \hp3$, then 
by Example \ref{z5}, we obtain % see that 
$$
 |X \cap \Pi| \le e_5(2,2) = 1 \quad \text{ for every } \Pi \in \hp3.
$$ 
Hence Lemma \ref{Zanella} shows that $|X| \le q + 1 \le 2q + 1.$ 
%\textbf{Case 1:}
%Suppose that $F_1|_{\Pi}, \dots, F_r|_{\Pi}$ be linearly independent for every $\Pi \in \hp3$. Then by Example \ref{z5} for any $\Pi \in \hp3$ we have $|X \cap \Pi| = 1.$ Now Lemma \ref{Zanella} show that $|X| \le q + 1 \le 2q + 1.$ 

Now let us suppose there exists %a hyperplane 
$\Pi \in \hp3$ such that $F_1|_{\Pi}, \dots, F_5|_{\Pi}$ are linearly dependent. 
By a projective linear change of coordinates, if necessary, we may assume that $\Pi = V(x_0)$. Let 
$\mathscr{F}$ denote the $\Fq$-vector subspace of $\Fq [x_0,x_1, x_2, x_3]$ generated by $F_1, \dots, F_5$, and let 
$$
\mathscr{F}_{\Pi} := \{F \in \mathscr{F} : x_0 \mid F\} , \quad \ s = \dim \mathscr{F}_{\Pi} \quad \text{ and } \quad t:= 5 - s.
$$ 
Since a nontrivial $\Fq$-linear combination of $F_1, \dots , F_5$ vanishes on $\Pi$, we find $s\ge 1$. Also elements of $\mathscr{F}_{\Pi}$ are of the form $x_0L$, where $L$ is a homogeneous linear~polynomial in $\Fq[x_0,x_1, x_2, x_3]$. The space of such  homogeneous linear polynomials has dimension $4$, and so $s\le 4$. Hence $1 \le t \le 4$. Choose a basis $F^*_1, \dots, F^*_5$ of $\mathscr{F}$ such that $F^*_{t+1}, \dots, F^*_5 \in \mathscr{F}_{\Pi}$. Evidently,  %Clearly, 
$ \V(F_1, \dots, F_5) = \V(\mathscr{F}) =  \V(F^*_1, \dots, F^*_5)$. %Also 
Let $G_i = F^*_i|_{\Pi}$ and $f_i = F^*_i|_{\Pi^c}$ for $i = 1, \dots, 5$, where $\Pi^c$ denotes the complement of $\Pi$ in $\PP^m$. 
%In other words,
Thus  $G_i$'s are obtained from $F^*_i$'s by putting $x_0 = 0$, while $f_i$'s  are obtained from $F^*_i$'s by putting $x_0 = 1$. 
Note that %$G_1, \dots, G_5 \in \Fq[x_0, \dots, x_2]$ are homogeneous quadrics with the property that 
$G_1, \dots, G_t$ are linearly independent homogeneous polynomials in $\Fq [x_1, x_2, x_3]$ of degree $2$, while $G_{t+1} = \dots = G_5 = 0$, whereas  $f_{1}, \dots , f_5$ are  linearly independent (possibly nonhomogeneous)  polynomials  in $\Fq [x_1, x_2, x_3]$ with $\deg f_i = 2$ for $i=1, \dots ,t$ and $\deg f_i \le 1$ for $i= {t+1}, \dots , 5$. % are of degree at most $2$. 
We now make a case-by-case analysis.  

\medskip

\textbf{Case 1: } $t = 1$. 

Since $f_2, f_3, f_4, f_5$ are linearly independent elements of the vector space of polynomials  in $\Fq [x_1, x_2, x_3]$  of degree  $\le 1$ and since this vector 
space has dimension~$4$, we see that $1$ is a linear combination of $f_2, f_3, f_4, f_5$. Consequently, $|X \cap \Pi^c| \le |Z(f_2, f_3, f_4, f_5)| = 0$. On the other hand, from Example \ref{z5}, 
we see that  
 $|X \cap \Pi| = |\V (G_1)| \le e_1 (2,2)= 2q+1$.  It follows that $|X| \le 2q + 1$. 

\medskip

\textbf{Case 2: }  $t = 2$.

Here,  $f_3, f_4, f_5$ are three linearly independent (possibly nonhomogeneous) linear polynomials in 3 variables. Hence the system of linear equations $f_3 = f_4 = f_5 = 0$ can have at most $1$ solution. Thus $|X \cap \Pi^c|  \le 1$. On the other hand, Example~\ref{z5} shows that $|X \cap \Pi| = |\V(G_1, G_2)| \le e_2(2,2) = q + 2$. 
Consequently,  $|X| \le q+3$. 

\medskip

\textbf{Case 3: } $t = 3$. 

By a similar argument as in Case 2, we observe that $|X \cap \Pi^c| \le q$. 
On the other hand,  $|X \cap \Pi| = |\V(G_1, G_2, G_3)| \le e_3(2,2) = q+1$. Thus, $|X| \le 2q + 1$. 
 
\medskip

\textbf{Case 4: } $t = 4$. 

Here, $|X \cap \Pi| = |\V(G_1, G_2, G_3, G_4)| \le e_4(2,2) = 2$ and so %to show that $|X| \le 2q + 1$, 
it suffices to show that $|X \cap \Pi^c| = |\Z(f_1, f_2, f_3, f_4, f_5)| \le 2q-1$. To this end, first note that if $\deg f_5=0$, then  $ |\Z(f_1, f_2, f_3, f_4, f_5)| =0$, and we are done. Thus we may assume that $f_5$ is %a nonzero polynomial 
of degree $1$ in $\Fq[x_1, x_2, x_3]$ and so $F^*_5 = x_0(a_0x_0 + a_1x_1+a_2x_2+a_3x_3)$ for some
$a_0,a_1, a_2, a_3\in \Fq$ with $(a_1, a_2, a_3)\ne (0,0,0)$.  By a homogeneous linear change of variables leaving $x_0$ unchanged, we can assume without loss of generality that $F_5^* = x_0x_1$; in particular, $f_5 = x_1$. Now let ${\mathscr{P}}$ %denote 
be the $\Fq$-vector subspace of $\Fq[x_1, x_2, x_3]$ generated by $f_1, \dots , f_5$. Note that substituting $x_0=1$ gives an % $\Fq$-linear 
isomorphism of $\mathscr{F}$ onto $\mathscr{P}$.  Further, if we let 
$$
\mathscr{P}_{1} := \{f \in \mathscr{P} : x_1 \mid f\} , \quad \ s' = \dim \mathscr{P}_{1} \quad \text{ and } \quad t':= 5 - s', 
$$ 
then $\mathscr{P}_{1}$ is isomorphic to $\mathscr{F}_{1} := \{F\in \mathscr{F} : x_1 \mid F\}$.  Also as 
in the case of $\mathscr{F}_{\Pi}$, we see that $1\le s' \le 4$. Moreover if $s'=4$, i.e., if $t'=1$, then arguing as in Case~1 above, but with $x_0$ and $x_1$ interchanged, we directly find $|X| \le 2q + 1$. Thus suppose $t'>1$. Replacing $f_1, \dots , f_5$ by a suitable basis of $\mathscr{P}$, we may suppose that $f_{t'+1}, \dots , f_{5}$ constitute a  basis of $\mathscr{P}_{1}$ and no nontrivial linear combination of 
$f_1, \dots , f_{t'}$ is in $\mathscr{P}_{1}$. Now if we write $f_j = x_1g_j + h_j$ for unique $g_j \in \Fq[x_1, x_2, x_3]$ of degree $\le 1$ and $h_j\in \Fq[x_2, x_3]$ of degree $\le 2$ ($1\le j \le t'$), then $h_1, \dots , h_{t'}$ are linearly independent  and if $\Z(h_1, \dots ,  h_{t'})$ denotes their zero set in $\Aff^2$, then 
% of degree $\le 2$. %  On the other hand if $t'\ge 1$, then 
$$
|\Z(f_1, f_2, f_3, f_4, f_5)| = |\Z(h_1, \dots ,  h_{t'})| \le e^{\Aff}_{t'}(2,2) \le 2q - t'+1 \le 2q-1,  
$$
where the last two inequalities follows from Lemma \ref{erA} and the fact that $t'>1$. This completes the proof. 
\end{proof}

It would be interesting to determine $e_r(d,m)$ for all permissible values of $r,d,m$, especially when $r\ge m+1$ and $d\le q$. In \cite{DG1} a conjecture is made for $r\le { {m+d-1}\choose{m} }$ and $d<q$, but this is open,
in general.

\section*{Acknowledgments}
\label{secAck}
\begin{small}
We are grateful to the anonymous referee for a careful reading of a preliminary version of this paper
and helpful comments. We would also like to thank Luca Giuzzi for bringing \cite{KS} to our attention. 
\end{small}

\end{document}